\theoremstyle{plain}%
\newtheorem{Theorem}{Theorem}[section]
\newtheorem{Lemma}[Theorem]{Lemma}
\newtheorem{Proposition}[Theorem]{Proposition}
\theoremstyle{definition}%
\newtheorem{Example}[Theorem]{Example}
\theoremstyle{remark}%
\newcommand{\abs}[1]{\left\lvert#1\right\rvert}
\newcommand{\braces}[1]{\left\lbrace{#1}\right\rbrace}
\newcommand{\cbraces}[1]{\left({#1}\right)} 
\newcommand*{\cpartial}{\overline{\partial}}
\newcommand{\descr}[2]{\left\lbrace{#1}|\; #2 \right\rbrace}
\newcommand{\set}{:=}
\newcommand{\ip}[2]{\left\langle #1,#2 \right\rangle}
\newcommand{\map}[3]{#1:\; #2 \rightarrow #3} 
\newcommand{\msym}[2]{\mathcal{S}^{#1}_{#2}} 
\newcommand{\smap}[3]{#1:\; #2 \rightrightarrows #3}
\newcommand{\norm}[1]{\lVert#1\rVert}
\newcommand{\real}[1]{\mathbb{R}^{#1}}
\newcommand{\realext}{\mathbb{R} \cup \braces{+\infty}}
\newcommand{\rplus}[2]{\descr{x\in \real{#1}}{x^{#2}>0}}
\DeclareMathOperator*{\argmax}{arg\:max}
\DeclareMathOperator*{\argmin}{arg\:min}
\DeclareMathOperator{\Arg}{Arg}
\DeclareMathOperator*{\conv}{conv}
\DeclareMathOperator*{\cl}{cl}
\DeclareMathOperator{\dom}{dom}
\DeclareMathOperator{\diam}{diam}
\DeclareMathOperator{\epi}{epi}
\DeclareMathOperator*{\interior}{int}
\DeclareMathOperator*{\range}{range}
\DeclareMathOperator{\graph}{graph}
\newcommand{\envspace}{\vspace{2mm}}
\begin{document}

\title{Singularities of Fitzpatrick and convex functions}

\author{Dmitry Kramkov\footnote{Carnegie Mellon University, Department
    of Mathematical Sciences, 5000 Forbes Avenue, Pittsburgh, PA,
    15213-3890, USA,
    \href{mailto:kramkov@cmu.edu}{kramkov@cmu.edu}. The author also
    has a research position at the University of Oxford.}  and Mihai
  S\^{i}rbu\footnote{The University of Texas at Austin, Department of
    Mathematics, 2515 Speedway Stop C1200, Austin, Texas 78712,
    \href{mailto:sirbu@math.utexas.edu}{sirbu@math.utexas.edu}. The
    research of this author was supported in part by the National
    Science Foundation under Grant DMS 1908903.}  }

\date{\today}

\maketitle

\begin{abstract}
  In a pseudo-Euclidean space with scalar product $S(\cdot, \cdot)$,
  we show that the singularities of projections on $S$-monotone sets
  and of the associated Fitzpatrick functions are covered by countable
  $c-c$ surfaces having positive normal vectors with respect to the
  $S$-product.  By~\citet{Zajicek:79}, the singularities of a convex
  function $f$ can be covered by a countable collection of $c-c$
  surfaces.  We show that the normal vectors to these surfaces are
  restricted to the cone generated by $F-F$, where
  $F\set \cl \range \nabla f$, the closure of the range of the
  gradient of $f$.
\end{abstract}

\begin{description}
\item[Keywords:] convexity, subdifferential, Fitzpatrick function,
  projection, pseudo-Euclidean space, normal vector, singularity.
\item[AMS Subject Classification (2020):] 26B25,
  26B05,
  47H05,
  52A20.
\end{description}

\section{Introduction}

A locally Lipschitz function $f=f(x)$ on $\real{d}$ is differentiable
almost everywhere, according to the Rademacher's Theorem.  The set of
its singularities
\begin{displaymath}
  \Sigma(f) \set \descr{x\in \real{d}}{f \text{ is not differentiable at
    } x}
\end{displaymath}
can be quite irregular. For instance, for $d=1$, \citet{Zahorski:46}
(see \citet{FowlPreiss:09} for a simple proof) shows that \emph{any}
$G_{\delta \sigma}$ set (countable union of countable intersections of
open sets) of Lebesgue measure zero is the singular set of \emph{some}
Lipschitz function.

By \citet{Zajicek:79}, the $\Sigma(f)$ of a convex function $f=f(x)$
on $\real{d}$ has $c-c$ structure: it can be covered by a countable
collection of the graphs of the differences of two convex functions of
dimension $d-1$. Short proofs of the \citeauthor{Zajicek:79} theorem
can be found in~\citet[Theorem~4.20, p.~93]{BenLin:00},
\citet[Theorem~12.22, p.~1147]{Thib:23},
and~\citet{Hajlasz:22}. \citet{Alberti:94} shows that, except for sets
with zero $\mathcal{H}^{d-1}$ measure, the Hausdorff measure of
dimension $d-1$, the covering can be achieved with smooth
surfaces. These results yield sharp conditions for the existence and
uniqueness of optimal maps in $\mathcal{L}_2$ optimal transport,
see~\citet{Bren:91} and \citet[Theorem~1.26]{AmbrGigli:13}.

Let $E$ be a closed subset of $\real{d}$ and
\begin{displaymath}
  d_E(x)\set\inf _{y\in E}|x-y|, \quad 
  P_E(x)\set \descr{y\in E}{|x-y|=d_E(x)},
\end{displaymath}
be the Euclidean distance to $E$ and the projection on $E$,
respectively. \citet{Erdos:46} proves that the singular set
\begin{displaymath}
  \Sigma (P_E)
  \set \descr{x\in \real{d}}{P_E(x) \text{ contains at least two points}}
\end{displaymath}
can be covered by countable sets with finite $\mathcal {H}^{d-1}$
measure.  \citet{Hajlasz:22} uses \cite{Zajicek:79} and the
observation of \citet{Asplund:69} that the function
\begin{displaymath}
  \psi _E (x)\set \frac 12 |x|^2-\frac 12 d_E^2(x), \quad x\in \real{d},
\end{displaymath}
is convex, to conclude that $\Sigma (P_E)$ has the $c-c$ structure.
\citet{AlbanCannar:99} obtain a lower bound on the size of the set
$\Sigma (d_E)$, where $d_E$ is not differentiable.

Let $S$ be a $d\times d$ invertible symmetric matrix with
$m \in \braces{0,1,\dots,d}$ positive eigenvalues.  Motivated by
applications to backward martingale transport in \cite{KramXu:22},
\cite{KramSirb:22a}, and \cite{KramSirb:23}, we investigate in this
paper the singularities of projections on monotone sets in the
pseudo-Euclidean space with the scalar product
\begin{displaymath}
  S(x,y) \set \ip{x}{Sy} = \sum_{i,j=1}^d x^iS^{ij} y^j, \quad x,y\in 
  \real{d}. 
\end{displaymath}

Let $G\subset \real{d}$ be an $S$-monotone or $S$-positive set:
\begin{displaymath}
  S(x-y,x-y)\geq 0, \quad x,y \in G.
\end{displaymath}
For every $x\in \real{d}$, the scalar square to $G$ and the projection
on $G$ are given by
\begin{align*}
  \phi _G(x) & \set \inf _{y\in G}S(x-y,x-y), \\
  P_G(x) & \set \argmin_{y\in G}S(x-y,x-y).
\end{align*}
Note that $S$-monotonicity is equivalent to
$x\in G\implies x\in P_G(x)$ and that
\begin{displaymath}
  \psi _G(x)\set \frac 12 S(x,x)- \frac 12 \phi _G(x), \quad
  x\in\real{d}, 
\end{displaymath}
is the Fitzpatrick function studied in \cite{Fitzpatrick:88},
\cite{Simons:07}, and \cite{Penot:09}.

The singularities of the projection $P_G$ can be classified as
\begin{align*}
  \Sigma(P_G)  & \set  \descr{x\in \real{d}}{P_G(x) \text{ contains at
                 least two points}} \\
               &\; =  {\Sigma}_0(P_G) \cup \Sigma_1(P_G), \\
  { \Sigma}_0(P_G)  & \set \descr{x\in \Sigma(P_G)}{S(y_1-y_2,y_1-y_2)=0
                      \text{~for \emph{all}~} y_1,y_2 \in P_G(x) }, \\
  \Sigma_1(P_G)  & \set \descr{x\in \Sigma(P_G)}{S(y_1-y_2,y_1-y_2)>0
                   \text{~for \emph{some}~} y_1,y_2 \in P_G(x)}. 
\end{align*}
By Theorem \ref{th:4}, $\Sigma _1(P_G)$ is contained in a countable
union of $c-c$ surfaces having strictly positive normal vectors in the
$S$-space. The structure of the zero-order singularities is described
in Theorems~\ref{th:6} and~\ref{th:5}.  If $m=1$, then
${\Sigma}_0(P_G)$ is covered by a countable number of hyperplanes
having isotropic normal vectors in the $S$-space. If $m\geq 2$, then
${\Sigma}_0(P_G)$ is covered by a countable family of $c-c$ surfaces
whose normal vectors are positive and almost isotropic in the
$S$-space. These results yield sharp conditions for the existence and
uniqueness of backward martingale maps in~\cite{KramSirb:23}.

Using similar tools, in Theorem \ref{th:2}, we improve the $c-c$
description of singularities of general convex functions $f=f(x)$ from
\citet{Zajicek:79} by showing that the covering surfaces have normal
vectors belonging to the cone generated by $F-F$, where
$F\set \cl \range \nabla f$, the closure of the range of the gradient
of $f$.

\section{Parametrization of singularities}
\label{sec:param-sing}

We say that a function $g = g(x)$ on $\real{d}$ has \emph{linear
  growth} if there is a constant $K=K(g)>0$ such that
\begin{displaymath}
  \abs{g(x)} \leq K (1 + \abs{x}), \quad x\in \real{d}.  
\end{displaymath}
We write $\dom{\nabla g}$ for the set of points where $g$ is
differentiable.

Let $j\in \braces{1,\dots,d}$. We denote by $\mathcal{C}^j$ the
collection of compact sets $C$ in $\real{d}$ such that
\begin{displaymath}
  y^j=1, \quad y\in C. 
\end{displaymath}
Any compact set $C\subset \rplus{d}{j}$ can be rescaled as
\begin{displaymath}
  \theta^j(C)\set \descr{  \frac{y}{y^j}}{y\in C}\in \mathcal{C}^j.
\end{displaymath}
For $x \in \real{d}$, we denote by $x^{-j}$ its sub-vector without the
$j$th coordinate:
\begin{displaymath}
  x^{-j} \set 
  \cbraces{x^1,\dots,x^{j-1},x^{j+1},\dots,x^{d}}
  \in \real{d-1}. 
\end{displaymath}
For $C\in \mathcal{C}^j$, we write $\mathcal{H}^{j}_{C}$ for the
family of functions $h=h(x)$ on $\real{d}$ having the decomposition:
\begin{equation}
  \label{eq:1}
  h(x) = x^j + g_1(x^{-j}) - g_2(x^{-j}), \quad
  x\in \real{d},    
\end{equation}
where the functions $g_1$ and $g_2$ on $\real{d-1}$ are convex, have
linear growth, and
\begin{equation}
  \label{eq:2}
  \nabla h(x)\in C, \quad x^{-j} \in  \dom{\nabla g_1} \cap \dom{\nabla
    g_2}.   
\end{equation}

The latter property has a clear geometric interpretation. Let $H$ be a
closed set in $\real{d}$ and $x\in H$. Following \cite[Definition~6.3
on page~199]{RockWets:98}, we call a vector $w\in \real{d}$
\emph{regular normal to $H$ at $x$} if
\begin{displaymath}
  \limsup_{\substack{H\ni y\to x \\ y\not= x}} 
  \frac{\ip{w}{y-x}}{\abs{y-x}} \leq 0. 
\end{displaymath}
A vector $w\in \real{d}$ is called \emph{normal to $H$ at $x$} if
there exist $x_n\in H$ and a regular normal vector $w_n$ to $H$ at
$x_n$ such that $x_n\rightarrow x$ and $w_n\rightarrow w$.

For a set $B$ in $\real{d}$, we denote by $\conv{B}$ its convex hull.
\begin{Lemma}
  \label{lem:1}
  Let $j\in \braces{1,\dots,d}$, $C\in \mathcal{C}^j$, $h$ be given
  by~\eqref{eq:1} for convex functions $g_1$ and $g_2$ with linear
  growth, and $H$ be the zero-level set of $h$:
  \begin{displaymath}
    H \set \descr{x\in \real{d}}{h(x) = 0}.
  \end{displaymath}
  Then $h\in \mathcal{H}^j_C$, that is, \eqref{eq:2} holds, if and
  only if for every $x\in H$, there exists a normal vector $w\in C$ to
  $H$ at $x$.
\end{Lemma}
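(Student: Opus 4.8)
The starting point is that $H$ is just a graph. Set $\varphi\set g_2-g_1$; since $g_1,g_2$ are finite, convex, and of linear growth, they are locally Lipschitz, so $\varphi$ is a continuous $c-c$ function on $\real{d-1}$, and $x\in H$ if and only if $x^j=\varphi(x^{-j})$. Thus $H$ is the graph of $\varphi$ over the hyperplane $\braces{x^j=0}$. If moreover $x^{-j}\in\dom{\nabla g_1}\cap\dom{\nabla g_2}$, then $h$ is differentiable at $x$ and $\nabla h(x)$ is the vector whose $j$-th coordinate is $1$ and whose remaining coordinates are $\nabla g_1(x^{-j})-\nabla g_2(x^{-j})=-\nabla\varphi(x^{-j})$; in particular $\nabla h(x)$ has $j$-th coordinate $1$. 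So \eqref{eq:2} is exactly the assertion that $\nabla h(x)\in C$ for all such $x$, and the lemma turns into a statement relating $\nabla h$ to the normals of a $c-c$ graph.

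The argument rests on two facts. First, a finite convex function is \emph{strictly} differentiable at every point where it is differentiable (equivalently, its subdifferential there is the singleton $\braces{\nabla g}$); hence $\varphi$ is strictly differentiable at every $z\in\dom{\nabla g_1}\cap\dom{\nabla g_2}$. Second: (a) at a point $z$ where $\varphi$ is merely differentiable, $\nabla h(x)$ --- the vector above, with $x\in H$ and $x^{-j}=z$ --- is a regular normal to $H$ at $x$; and (b) at a point $z$ where $\varphi$ is strictly differentiable, every normal vector to $H$ at $x$ is a scalar multiple of $\nabla h(x)$. Part (a) is read off directly from the definitions of the derivative and of a regular normal. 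For (b), I would subtract the affine part: strict differentiability says that on some ball about $z$ the function $\varphi-\ip{\nabla\varphi(z)}{\cdot}$ is $\varepsilon$-Lipschitz, so after the associated fixed shear of $\real{d}$ the set $H$ agrees near $x$ with the graph of an $\varepsilon$-Lipschitz function; every regular normal to the graph of an $\varepsilon$-Lipschitz function --- hence every limit of such at nearby graph points --- makes angle at most $\arctan\varepsilon$ with the $e_j$-axis, so undoing the shear and letting $\varepsilon\downarrow0$ confines every normal vector to $H$ at $x$ to the line through $\nabla h(x)$, which together with (a) gives (b). I expect (b) to be the only real obstacle: its content is that strict differentiability --- available here only because the $c-c$ structure is built from \emph{convex} summands, and false for a general differentiable $\varphi$ --- controls not merely the regular normals at $x$ itself but also all limits of regular normals taken at nearby points of $H$, where $\varphi$ may well fail to be differentiable.

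For the ``only if'' direction, suppose $h\in\mathcal{H}^j_C$ and fix $x\in H$. Since $g_1,g_2$ are differentiable almost everywhere, $\dom{\nabla g_1}\cap\dom{\nabla g_2}$ is dense; choose $z_n\to x^{-j}$ in it and let $x_n\in H$ have $x_n^{-j}=z_n$ and $x_n^j=\varphi(z_n)$, so $x_n\to x$ by continuity of $\varphi$. By (a) each $w_n\set\nabla h(x_n)$ is a regular normal to $H$ at $x_n$, and $w_n\in C$ because $h\in\mathcal{H}^j_C$. As $C$ is compact, $w_n\to w\in C$ along a subsequence, and then $w$ is by definition a normal vector to $H$ at $x$.

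For the ``if'' direction, suppose every $x\in H$ carries a normal vector in $C$, and fix $x\in H$ with $z\set x^{-j}\in\dom{\nabla g_1}\cap\dom{\nabla g_2}$. Then $g_1,g_2$, and hence $\varphi$, are strictly differentiable at $z$, so by (b) every normal vector to $H$ at $x$ is a multiple of $\nabla h(x)$. The one supplied by the hypothesis, $w\in C$, therefore equals $\nabla h(x)$, since both $w$ and $\nabla h(x)$ have $j$-th coordinate $1$ (recall $C\in\mathcal{C}^j$). Hence $\nabla h(x)=w\in C$, that is, \eqref{eq:2} holds.
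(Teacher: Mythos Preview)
Your proof is correct. The ``only if'' direction matches the paper's argument essentially verbatim: gradients are regular normals, the set $U\set\dom\nabla g_1\cap\dom\nabla g_2$ is dense, and compactness of $C$ produces the required limiting normal.

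For the ``if'' direction the two arguments diverge in technique, though they rest on the same underlying fact. The paper shows that any regular normal $(1,v_n)$ to $H$ at a nearby point $x_n=(-f(u_n),u_n)$ forces $v_n$ into the Clarke subdifferential $\cpartial f(u_n)$, invoking Clarke's characterization \cite[Corollary~1.10]{Clarke:75}; then, since $f=g_1-g_2$ is strictly differentiable at $u\in U$, the Clarke subdifferential collapses to $\{\nabla f(u)\}$ along $u_n\to u$, yielding $v_n\to\nabla f(u)$ and hence $w=\nabla h(x)$. Your route bypasses Clarke entirely: you use strict differentiability of $\varphi$ at $z\in U$ to recast $H$, after a fixed shear, as the graph of an $\varepsilon$-Lipschitz function on a neighbourhood of $z$, observe that every nonzero regular normal to such a graph satisfies $|w^{-j}|\le\varepsilon|w^j|$ (since the tangent cone projects onto every direction $v$ with height at most $\varepsilon|v|$), and let $\varepsilon\downarrow0$ to trap the normal on the line through $\nabla h(x)$. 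Both arguments exploit exactly the convexity of $g_1,g_2$ through the implication ``differentiable $\Rightarrow$ strictly differentiable''; your version is more self-contained but requires the small computation bounding regular normals to Lipschitz graphs, while the paper's is shorter once Clarke's result is granted.
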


\begin{proof}
  We can assume that $j=1$. Denote $f\set g_1-g_2$, so that
  $h(x)=x^1+f(x^{-1})$.  We have that
  $\dom {\nabla{h}}=\real{}\times \dom{\nabla{f}}$,
  $\nabla h(x)= \cbraces{1, \nabla f(x^{-1})}$, and
  \begin{displaymath}
    H=\descr{(-f(u), u)}{u\in \real{d-1}}.
  \end{displaymath}
  Denote also
  $U\set \dom{\nabla g_1} \cap \dom{\nabla g_2} \subset \dom
  {\nabla{f}}$.
  
  $\implies$: Clearly, the gradient $\nabla h(x)$ is a regular normal
  vector to $H$ at $x\in \dom{\nabla h}\cap H$. As $U$ is dense in
  $\real{d-1}$, the result holds by standard compactness arguments.

  $\impliedby$: Let $u\in U$, $v\in \real{d-1}$, and $w=(1,v)\in C$ be
  a normal vector to $H$ at $x=(-f(u),u)$.  Take a sequence
  $(x_n,w_n)$, $n\geq 1$, that converges to $(x,w)$ and where $w_n$ is
  a regular normal vector to $H$ at $x_n\in H$ with $w_n^1=1$. Such a
  sequence exists by the definition of a normal vector. We can
  represent $x_n=(-f(u_n), u_n)$ for $u_n\in \real{d-1}$ and
  $w_n=(1, v_n)$ for $v_n\in \real{d-1}$.
  
  We claim that $v_n$ belongs to the Clarke gradient of $f$ at $u_n$:
  \begin{displaymath}
    v_n \in \cpartial f(u_n) \set  \conv \descr{\lim_m \nabla
      f(r_m)}{\dom \nabla f \ni r_m \to u_n}.
  \end{displaymath}
  The continuity of $\nabla f$ at $u\in U$ then yields that
  $v_n \to \nabla f(u)$. Hence, $\nabla f(u)=v$, implying that
  $\nabla h(x) =(1, \nabla f(u))= w \in C$.
  
  In order to prove the claim, we write the definition of
  $w_n=(1,v_n)$ being regular normal to $H$ at $x_n=(-f(u_n), u_n)$ as
  \begin{displaymath}
    \limsup _{\substack{r\rightarrow  u_n \\ r\not= u_n}}
    \frac{-\cbraces{f(r)-f(u_n)}+\ip{v_n}{r-u_n}}{|f(r)-f(u_n)|+|r-u_n|}
    \leq 0. 
  \end{displaymath}
  Using the Lipschitz property of $f$, we obtain that
  \begin{displaymath}
    \ip{v_n}{s}\leq \limsup_{\delta \downarrow 0}\frac{f(u_n+\delta
      s)-f(u_n)}{\delta}, \quad s\in \real{d-1}. 
  \end{displaymath}
  By~\cite[Corollary~1.10]{Clarke:75}, we have that
  $v_n\in \cpartial f (u_n)$, as claimed.
\end{proof}

We recall that the subdifferential
$\smap{\partial f}{\real{d}}{\real{d}}$ of a closed convex function
$\map{f}{\real{d}}{\realext}$ is defined as
\begin{displaymath}
  \partial f(x)\set \descr{y\in \real d}{\ip{z}{y}\leq f(x+z)-f(x), \,
    z\in \real{d}}. 
\end{displaymath}
Clearly,
$\dom \partial f \set \descr{x\in \real{d}}{\partial f(x)\not=
  \emptyset} \subset \dom f \set \descr{x\in \real{d}}{f(x)<\infty}$.

The following theorem is our main technical tool for the study of
singularities of convex and Fitzpatrick functions in
Sections~\ref{sec:sing-points-conv} and~\ref{sec:sing-points-fitzp}.

\begin{Theorem}
  \label{th:1}
  Let $\map{f}{\real{d}}{\realext}$ be a closed convex function,
  $j\in \braces{1,\dots, d}$, and $C_1$ and $C_2$ be compact sets in
  $\real{d}$ such that
  \begin{displaymath}
    y^j> 0, \quad y \in C_2-C_1. 
  \end{displaymath}
  There exists a function $h\in \mathcal {H}^j_{\theta^j(C_2-C_1)}$
  such that
  \begin{align*}
    \Sigma_{C_1,C_2}(\partial f)
    \set & \descr{x\in \dom{\partial f(x)}}{\partial f(x) \cap C_i \not
           = \emptyset, \, i=1,2} \\ 
    \subset & \descr{x\in \real{d}}{h(x)=0}.   
  \end{align*}
\end{Theorem}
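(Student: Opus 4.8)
The plan is to take $j=1$ (permuting coordinates), write $x=(x^1,x^{-1})\in\mathbb R\times\mathbb R^{d-1}$, and build $h$ explicitly from the convex conjugate $f^*$. If $f$ is improper or $C_i\cap\dom f^*=\emptyset$ for some $i$, then $\range\partial f\subseteq\dom f^*$ forces $\Sigma_{C_1,C_2}(\partial f)=\emptyset$ and any member of $\mathcal H^1_{\theta^1(C_2-C_1)}$ does the job (e.g.\ $h(x)=x^1+\ip{q^{-1}/q^1}{x^{-1}}$ for a fixed $q\in C_2-C_1$), so assume $f$ proper and $C_i\cap\dom f^*\neq\emptyset$. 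By compactness and the hypothesis $y^1>0$ on $C_2-C_1$, the numbers $a_1:=\max_{p\in C_1}p^1$ and $a_2:=\min_{p\in C_2}p^1$ satisfy $a_1<a_2$; fix $\lambda<\lambda'$ in the open interval $(a_1,a_2)$ and put $\delta:=\lambda'-\lambda>0$.

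For $i=1,2$ set $\beta_i(x):=\sup_{p\in C_i}\big(\ip{p}{x}-f^*(p)\big)$; each is a finite convex function, $\beta_i\le f$ by Fenchel--Young, and $\partial\beta_i(x)=\conv M_i(x)\subseteq\conv C_i$ with $M_i(x):=\{p\in C_i:\ip{p}{x}-f^*(p)=\beta_i(x)\}$ the nonempty compact active set. Let $M:=\max(\beta_1,\beta_2)$. Since the one-variable subgradients of $s\mapsto\beta_1(s,u)$ lie in $(-\infty,a_1]$ and those of $s\mapsto\beta_2(s,u)$ in $[a_2,\infty)$, the map $s\mapsto\beta_2(s,u)-\beta_1(s,u)$ is locally Lipschitz with a.e.\ derivative $\ge a_2-a_1>0$, hence increases strictly from $-\infty$ to $+\infty$ and has a unique zero $\gamma(u)$; moreover $s\mapsto M(s,u)-\lambda s$ equals the strictly decreasing $\beta_1(s,u)-\lambda s$ on $(-\infty,\gamma(u)]$ and the strictly increasing $\beta_2(s,u)-\lambda s$ on $[\gamma(u),\infty)$, so its infimum over $s$ is finite and attained only at $s=\gamma(u)$. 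Define
\[
g_2(u):=\tfrac1\delta\inf_{s\in\mathbb R}\big(M(s,u)-\lambda s\big),\qquad
g_1(u):=\tfrac1\delta\inf_{s\in\mathbb R}\big(M(s,u)-\lambda' s\big).
\]
As partial infima of functions jointly convex in $(s,u)$ these are convex, elementary two-sided estimates on $M(s,u)-\lambda s$ give them linear growth, and both infima being attained at $s=\gamma(u)$ yields $g_2(u)-g_1(u)=\gamma(u)$. Put $h(x):=x^1+g_1(x^{-1})-g_2(x^{-1})=x^1-\gamma(x^{-1})$, so $\{h=0\}=\graph\gamma$.

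It remains to verify the two defining properties. (i) $\Sigma_{C_1,C_2}(\partial f)\subseteq\{h=0\}$: if $x_0=(s_0,u_0)$ and $p_i\in\partial f(x_0)\cap C_i$, then $\ip{p_i}{x_0}-f^*(p_i)=f(x_0)$, so $\beta_i(x_0)=f(x_0)$ for $i=1,2$; thus $\beta_1=\beta_2$ at $x_0$, hence $\gamma(u_0)=s_0$ and $h(x_0)=0$. (ii) Condition \eqref{eq:2}: let $u$ be a common differentiability point of $g_1,g_2$. By the subdifferential formula for marginal functions, applied at the unique minimizer $\gamma(u)$, $\partial(\delta g_2)(u)=\{v:(\lambda,v)\in\partial M(\gamma(u),u)\}$, and since $\beta_1=\beta_2$ at $(\gamma(u),u)$ we have $K:=\partial M(\gamma(u),u)=\conv\big(M_1(\gamma(u),u)\cup M_2(\gamma(u),u)\big)$. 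Differentiability of $g_2$ at $u$ says $K\cap\{z^1=\lambda\}$ is a single point; as $\lambda$ lies strictly between $\max\{z^1:z\in M_1(\gamma(u),u)\}\le a_1$ and $\min\{z^1:z\in M_2(\gamma(u),u)\}\ge a_2$, it is an interior value of $\proj_1 K$, and a compact convex set having a singleton cross-section at an interior value of its first-coordinate projection must lie on a (non-vertical) line $\ell$; the same holds with $\lambda'$, so $(\lambda,\nabla(\delta g_2)(u))$ and $(\lambda',\nabla(\delta g_1)(u))$ both lie on $\ell$. Taking $p_1\in M_1(\gamma(u),u)\subseteq C_1$ and $p_2\in M_2(\gamma(u),u)\subseteq C_2$ (both on $\ell$) and $q:=p_2-p_1\in C_2-C_1$ with $q^1\ge a_2-a_1>0$, the direction of $\ell$ is parallel to $q$; comparing first coordinates gives $\nabla(g_2-g_1)(u)=\nabla\gamma(u)=-q^{-1}/q^1$, hence $\nabla h(x)=(1,q^{-1}/q^1)=\theta^1(q)\in\theta^1(C_2-C_1)$ and $h\in\mathcal H^1_{\theta^1(C_2-C_1)}$.

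The main obstacle is exactly step (ii): the functions $g_1,g_2$ are designed so that their difference reproduces $\gamma$, but one must ensure $\nabla h$ lands in $\theta^1(C_2-C_1)$ itself rather than merely in its convex hull --- this is what the ``singleton cross-section forces a line'' observation provides, since that line necessarily passes through honest active points of $C_1$ and of $C_2$. The remaining ingredients (convexity and linear growth of $g_i$; the subdifferential rules for marginal functions and for a maximum of convex functions; the strict monotonicities and finiteness of the infima) are routine.
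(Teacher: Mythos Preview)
Your construction is the paper's construction: your $M=\max(\beta_1,\beta_2)$ is exactly the paper's $f_C$ with $C=C_1\cup C_2$, your $\delta g_2(u)=\inf_s(M(s,u)-\lambda s)$ and $\delta g_1(u)=\inf_s(M(s,u)-\lambda' s)$ are the values $g(\lambda,u)$ and $g(\lambda',u)$ of the paper's saddle function $g(t,u)=\inf_s(f_C(s,u)-st)$ at $t=\lambda,\lambda'$, and the resulting $h$ coincides with the paper's $h$ (with $r_1=\lambda$, $r_2=\lambda'$). The inclusion $\Sigma_{C_1,C_2}(\partial f)\subset\{h=0\}$ is obtained in the same way, via $\beta_1(x_0)=\beta_2(x_0)=f(x_0)$.

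The only genuine difference is in step~(ii). The paper cites Rockafellar's saddle-function calculus (Theorems~33.1 and~37.5 in \cite{Rock:70}) to obtain the equivalence $(t,v)\in\partial f_C(s,u)\iff v\in\partial_u g(t,u),\ -s\in\partial_t g(t,u)$, and then reads off $\nabla g_i(u)$ as the explicit convex combination~\eqref{eq:6} of some $v_1\in\partial f_C(x)\cap C_1$ and $v_2\in\partial f_C(x)\cap C_2$. You instead derive the marginal subdifferential formula directly and use the geometric observation that a compact convex set with a singleton cross-section at an interior level of its first coordinate must lie on a line; since that line contains points of $M_1\subset C_1$ and of $M_2\subset C_2$, the direction is an honest element of $C_2-C_1$. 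Both arguments are correct and yield the same conclusion; yours trades the black-box citation for a short self-contained convex-geometry lemma, while the paper's route gives the slightly stronger by-product $\Sigma_{C_1,C_2}(\partial f_C)=\{h=0\}$ (equality, not just inclusion).
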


The proof of the theorem relies on Lemma~\ref{lem:2}.  For a closed
set $A\subset \real{d}$, we denote
\begin{equation}
  \label{eq:3}
  f_A(x): = \sup_{y\in A}\cbraces{\ip{x}{y} - f^*(y)}, \quad x\in
  \real{d},
\end{equation}
where $f^*$ is the convex conjugate of $f$:
\begin{displaymath}
  f^*(y)\set \sup_{x\in \real{d}}\cbraces{\ip{x}{y}-f(x)} \in
  \realext, \quad y \in 
  \real{d}.
\end{displaymath}
We have that $f_A$ is a closed convex function taking values in
$\realext$ if and only if
\begin{displaymath}
  A\cap \dom {f^*} = \descr{x\in A}{f^*(x)< \infty} \not= \emptyset;
\end{displaymath}
otherwise, $f_A=-\infty$. We recall that
\begin{equation}
  \label{eq:4}
  f(x)=\ip{x}{y}-f^*(y)\iff y\in 
  \partial f(x) \iff x\in \partial f^*(y). 
\end{equation}

\begin{Lemma}
  \label{lem:2}
  Let $\map{f}{\real{d}}{\realext}$ be a closed convex function and
  $C$ be a compact set in $\real{d}$ such that
  $C\cap \dom{f^*}\not=\emptyset$.  Then $f_C$ has linear growth and
  for every $x\in \real{d}$,
  \begin{gather*}
    \partial f_C(x) \cap C = \Arg_C(x) \set \argmax_{y\in
      C}\cbraces{\ip{x}{y} -
      f^*(y)} \not=\emptyset, \\
    \partial f_C(x)  = \conv{\cbraces{\partial f_C(x) \cap C}}, \\
    \partial f(x) \cap C \not= \emptyset \iff f(x) = f_C(x) \iff
    \partial f(x) \cap C = \partial f_C(x) \cap C.
  \end{gather*}
  In particular, $f_C$ is differentiable at $x$ if and only if
  $\partial f_C(x) \cap C$ is a singleton, in which case
  \begin{displaymath}
    \partial f_C(x) = \braces{\nabla f_C(x)} \in C. 
  \end{displaymath}
\end{Lemma}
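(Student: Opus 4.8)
The plan is to prove, as the single substantive step, the identity
\[
  \partial f_C(x) = \conv(\Arg_C(x)), \qquad x\in\real{d},
\]
and then to read off every assertion of the lemma from it together with the Fenchel--Young relation~\eqref{eq:4}.

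\emph{Preliminaries.} Since $C$ is compact, $R\set\sup_{y\in C}\abs{y}<\infty$; since $f$ is proper, $f^*$ is nowhere $-\infty$ and, being lower semicontinuous, attains a finite minimum $m\set\min_{y\in C}f^*(y)$ on $C$ (finite because $C\cap\dom f^*\neq\emptyset$). Picking $y_0\in C\cap\dom f^*$, for every $x$,
\[
  -R\abs{x}-f^*(y_0)\le \ip{x}{y_0}-f^*(y_0)\le f_C(x)\le R\abs{x}-m ,
\]
which yields linear growth and shows $f_C$ is a finite-valued closed convex function on $\real{d}$. Moreover $y\mapsto\ip{x}{y}-f^*(y)$ is upper semicontinuous on the compact set $C$ and finite somewhere on it, so it attains its maximum: $\Arg_C(x)\neq\emptyset$, and $\Arg_C(x)=C\cap\{y:\ip{x}{y}-f^*(y)\ge f_C(x)\}$ is compact; hence $\conv(\Arg_C(x))$ is a nonempty compact convex set.

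\emph{Core step.} I would compute the one-sided directional derivative of $f_C$ and show
\[
  f_C'(x;v)\set\lim_{t\downarrow0}\frac{f_C(x+tv)-f_C(x)}{t}=\max_{y\in\Arg_C(x)}\ip{v}{y},\qquad v\in\real{d}.
\]
The bound ``$\ge$'' is immediate: for $y\in\Arg_C(x)$, $f_C(x+tv)\ge\ip{x+tv}{y}-f^*(y)=f_C(x)+t\ip{v}{y}$. For ``$\le$'', take $t_k\downarrow0$ and, using the preliminaries, choose $y_k\in\Arg_C(x+t_kv)\subset C$; since $y_k\in C$ forces $\ip{x}{y_k}-f^*(y_k)\le f_C(x)$,
\[
  \frac{f_C(x+t_kv)-f_C(x)}{t_k}=\frac{\big(\ip{x}{y_k}-f^*(y_k)\big)-f_C(x)}{t_k}+\ip{v}{y_k}\le\ip{v}{y_k}.
\]
Passing to a subsequence with $y_k\to y^*\in C$, continuity of $f_C$ gives $\ip{x}{y_k}-f^*(y_k)\to f_C(x)$ (since $f_C(x+t_kv)=\ip{x+t_kv}{y_k}-f^*(y_k)$ and $t_kv\to0$), and upper semicontinuity of $y\mapsto\ip{x}{y}-f^*(y)$ then forces $y^*\in\Arg_C(x)$; passing to the limit above yields $f_C'(x;v)\le\ip{v}{y^*}$. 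Since for a finite convex function $\partial f_C(x)=\{y:\ip{v}{y}\le f_C'(x;v)\text{ for all }v\}$ and $f_C'(x;\cdot)$ is exactly the support function of the nonempty compact convex set $\conv(\Arg_C(x))$, the identity $\partial f_C(x)=\conv(\Arg_C(x))$ follows. I expect this directional-derivative computation --- especially the ``$\le$'' half, which is where compactness of $C$ and upper semicontinuity of $f^*$ are both needed --- to be the only real obstacle; everything else is formal.

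\emph{Consequences.} First, $\partial f_C(x)\cap C=\Arg_C(x)$: ``$\supseteq$'' is clear, and if $z=\sum_i\lambda_iy_i\in C$ with $y_i\in\Arg_C(x)$, $\lambda_i\ge0$, $\sum_i\lambda_i=1$, then Jensen's inequality for the convex function $f^*$ gives $f^*(z)\le\sum_i\lambda_if^*(y_i)=\sum_i\lambda_i\big(\ip{x}{y_i}-f_C(x)\big)=\ip{x}{z}-f_C(x)$, while $z\in C$ gives the reverse, so $z\in\Arg_C(x)$. Next, the Fenchel--Young inequality gives $f_C\le f$ pointwise. If $\partial f(x)\cap C\neq\emptyset$, choosing $y$ there and using~\eqref{eq:4} gives $f(x)=\ip{x}{y}-f^*(y)\le f_C(x)\le f(x)$, so $f(x)=f_C(x)$. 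If $f(x)=f_C(x)$, then by the previous display and~\eqref{eq:4},
\[
  \partial f_C(x)\cap C=\Arg_C(x)=\{y\in C:\ip{x}{y}-f^*(y)=f(x)\}=\partial f(x)\cap C .
\]
Finally, if $\partial f(x)\cap C=\partial f_C(x)\cap C$ then this set equals $\Arg_C(x)\neq\emptyset$, closing the cycle of equivalences. For the differentiability statement: a finite convex function is differentiable at $x$ iff $\partial f_C(x)$ is a singleton; since $\partial f_C(x)=\conv(\partial f_C(x)\cap C)$ and $\partial f_C(x)\cap C\neq\emptyset$, $\partial f_C(x)$ is a singleton iff $\partial f_C(x)\cap C$ is, in which case their common single element lies in $C$, i.e.\ $\nabla f_C(x)\in C$.
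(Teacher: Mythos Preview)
Your proof is correct and follows the same architecture as the paper's: establish $\partial f_C(x)=\conv\Arg_C(x)$ as the key identity, then read off everything else from~\eqref{eq:4}. The one genuine difference is in how that identity is obtained. The paper truncates $C$ to $E=\{z\in C:f^*(z)\le K\}$ so that the integrand $h(x,y)=\ip{x}{y}-f^*(y)$ is finite, then invokes the classical envelope theorem \cite[Theorem~4.4.2]{HiriarLemar:01} for $f_E=f_C$ near $x$. You instead compute $f_C'(x;v)=\max_{y\in\Arg_C(x)}\ip{v}{y}$ directly via a compactness/upper-semicontinuity argument, which is essentially a self-contained proof of the envelope theorem in this special case and sidesteps the truncation. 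Your Jensen argument for $\partial f_C(x)\cap C=\Arg_C(x)$ is exactly the paper's ``concavity of $h(x,\cdot)$'' step, written out.
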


\begin{proof}
  Since $C$ is a compact set, $f^*$ is a closed convex function, and
  $C\cap\dom {f^*}\not= \emptyset$, we have that
  \begin{displaymath}
    \sup_{y\in C} \abs{y} < \infty, \quad \inf_{x\in C}
    f^*(x) > -\infty,  
  \end{displaymath}
  and that $\Arg_C(x)$ is a non-empty compact.  Let
  $y_0\in C\cap \dom {f^*}$.  From the definition of $f_C$ we deduce
  that
  \begin{displaymath}
    -\abs{x} \abs{y_0} -f^*(y_0) \leq f_C(x) \leq \abs{x}\sup_{y\in
      C}\abs{y}-\inf_{y\in C}f^*(y), \quad x\in \real{d}.  
  \end{displaymath}
  It follows that $f_C$ has linear growth.
  
  The function
  \begin{displaymath}
    h(x,y) \set \ip{x}{y}-f^*(y) \in \real{}\cup\braces{-\infty}, \quad
    x,y\in \real{d}, 
  \end{displaymath}
  is linear in $x$ and concave and upper semi-continuous in $y$. Fix
  $x\in \real{d}$.  We can choose $K$ large enough such that for
  \begin{displaymath}
    E\set \descr{z\in C}{f^*(z)\leq K},
  \end{displaymath}
  we have that $f_C=f_{E}$ in a neighborhood of $x$ and
  \begin{displaymath}
    \Arg _C(x)=\Arg _E(x)\set
    \argmax_{y\in E}\cbraces{\ip{x}{y} -f^*(y)}.
  \end{displaymath}
  Since $E$ is compact and the function $h(\cdot, y)$ is finite for
  $y\in E$, the classical envelope theorem \cite[Theorem~4.4.2,
  p.~189]{HiriarLemar:01} yields that
  \begin{align*}
    \partial f_C(x) \
    & =  \partial f_E(x)=  \partial \max_{y\in E}h(x,y) =
      \conv\bigcup_{y\in \Arg_E(x)} \partial_x h(x,y) =  \conv
      \Arg_E(x) \\
    & = \conv{\Arg _C(x)}.
  \end{align*}
  From the concavity of $h(x,\cdot)$ we deduce that
  \begin{displaymath}
    \partial f_C(x) \cap C= \cbraces{\conv{\Arg _C(x)}}  \cap C
    = \Arg _C(x).  
  \end{displaymath}

  Clearly, $f_C\leq f$. If $y\in \partial f(x) \cap C$, then
  $f(x) = \ip{x}{y} - f^*(y) \leq f_C(x)$.  Hence, $f_C(x)= f(x)$ and
  $y\in \Arg _C(x)\subset \partial f_C(x)$.

  Conversely, let $f_C(x) = f(x)$. For every
  $y\in \partial f_C(x) \cap C$, we have that
  $f(x) = f_C(x) = \ip{x}{y} - f^*(y)$ and then that
  $y\in \partial f(x)\cap C$.
  
  Finally, being a convex function, $f_C$ is differentiable at $x$ if
  and and only if $\partial f_C(x)$ is a singleton. In this case,
  $\partial f_C(x) = \braces{\nabla f_C(x)}$.
\end{proof}

\begin{proof}[Proof of Theorem~\ref{th:1}]
  Hereafter, $i=1,2$. We assume that
  $\Sigma_{C_1,C_2}(\partial f) \not= \emptyset$ as otherwise, there
  is nothing to prove. This implies that
  $C_i\cap \dom {f^*}\not=\emptyset$.  Let $C\set C_1\cup C_2$.
  Lemma~\ref{lem:2} yields that
  \begin{equation}
    \label{eq:5}
    \Sigma_{C_1,C_2}(\partial f) = \Sigma_{C_1,C_2}(\partial
    f_C) \cap \descr{x\in \real{d}}{f(x) = f_C(x)}. 
  \end{equation}

  To simplify notations we assume that $j=1$.  Denote by
  \begin{displaymath}
    a \set \max_{y\in C_1} {y}^1 <\min_{y \in C_2} {y}^1 \set b. 
  \end{displaymath}  
  We write $x \in \real{d}$ as $(t,u)$, where $t\in \real{}$ and
  $u\in \real{d-1}$, and define the saddle function
  \begin{displaymath}
    g(t,u): = \inf_{s\in \real{}} \cbraces{f_C(s,u) - st},
    \quad a<t<b, u\in \real{d-1}. 
  \end{displaymath}  

  Select $y_i =(q_i, z_i)\in C_i\cap \dom {f^*}$. We have that
  \begin{align*}
    f_C(s,u) & \geq \max_{i=1,2} \cbraces{sq_i+\ip{u}{z_i} - f^*(y_i)}.
  \end{align*}
  Since $q_1\leq a<b\leq q_2$, it follows from the definition of $g$
  that
  \begin{displaymath}
    -\infty< g(t,u) \leq f_C(0,u), \quad a<t<b, u\in \real{d-1}.
  \end{displaymath}
  By Lemma~\ref{lem:2}, $f_C$ has linear growth. The classical results
  on saddle functions, Theorems~33.1 and~37.5 in \cite{Rock:70}, imply
  that
  \begin{enumerate}[label = {\rm (\roman{*})}, ref={\rm (\roman{*})}]
  \item \label{item:1} For every $a<t <b$, the function $g(t,\cdot)$
    is convex and has linear growth on $\real{d-1}$.
  \item \label{item:2} For every $u\in \real{d-1}$, the function
    $g(\cdot, u)$ is concave and finite on $(a,b)$.
  \item \label{item:3} For any $a<t<b$, we have that
    $(t,v) \in \partial f_C(s,u)$ if and only if
    $v \in \partial_u g(t,u)$ and $-s \in \partial_tg(t,u)$. In this
    case, $g(t,u) = f_C(s,u) - st$.
  \end{enumerate}
  
  We take $a <r_1<r_2<b$ and denote $g_i \set g(r_i, \cdot)$. We
  verify the assertions of the theorem for the function
  \begin{displaymath}
    h(s,u)\set  s +
    \frac1{r_2-r_1}\cbraces{g_2(u) - g_1(u)}, \quad s\in \real{}, u\in
    \real{d}.  
  \end{displaymath} 
  More precisely, we show that
  \begin{displaymath}
    \Sigma_{C_1,C_2}(\partial f_C)= \descr{x\in \real{d}}{h(x)=0},   
  \end{displaymath}
  which together with~\eqref{eq:5} implies the result.

  Let $x=(s,u)$ and $(t_i,v_i) \in \partial f_C(x) \cap C_i$.  As
  $t_1 \leq a<r_i<b \leq t_2$, the convexity of subdifferentials
  yields that $(r_i,w_i) \in \partial f_C(x)$, where
  \begin{equation}
    \label{eq:6}
    w_i = \frac{t_2-r_i}{t_2-t_1} v_1 + \frac{r_i-t_1}{t_2-t_1} v_2.
  \end{equation}
  By~\ref{item:3}, $g_i(u) = f_C(x) - s r_i$ and then $h(x) = 0$.

  Conversely, let $x=(s,u)$ be such that $h(x) = 0$ or, equivalently,
  \begin{displaymath}
    -s = \frac{g_2(u) - g_1(u)}{r_2-r_1} = \frac{g(r_2,u) - g(r_1,u)}{r_2-r_1}.
  \end{displaymath}
  The mean-value theorem yields $r\in [r_1,r_2]$ such that
  $-s \in \partial_t g(r,u)$.  Observe that $a<r<b$. Taking any
  $w \in \partial_u g(r,u)$, we deduce from~\ref{item:3} that
  $y\set (r,w) \in \partial f_C(x)$. As
  $\partial f_C = \conv \cbraces{\partial f_C \cap C}$, the point $y$
  is a convex combination of some $y_i \in \partial f_C(x) \cap C_i$,
  $i=1,2$.  In particular, $x\in \Sigma_{C_1,C_2}(\partial f_C)$.

  Finally, let $x=(s,u)$ be such that $h(x) = 0$ and $g_1$ and $g_2$
  are differentiable at $u$. As we have already shown, the gradients
  $w_i \set \nabla g_i(u)$ are given by~\eqref{eq:6} for some
  $(t_i,v_i) \in \partial f_C(x) \cap C_i$. It follows that
  \begin{displaymath}
    \nabla h(x) = \cbraces{1, \frac{w_2-w_1}{r_2-r_1}} = \cbraces{1,
      \frac{v_2-v_1}{t_2-t_1}} \in 
    \theta^1(C_2-C_1). 
  \end{displaymath}
  Hence, $h \in \mathcal{H}^1_{\theta^1(C_2-C_1)}$.
\end{proof}

\section{Singular points of convex functions}
\label{sec:sing-points-conv}

For a multi-function $\smap{\Psi}{\real{d}}{\real{d}}$ taking values
in closed subsets of $\real{d}$, we denote its domain by
\begin{displaymath}
  \dom \Psi \set \descr{x\in  \real{d}}{\Psi (x)\not= \emptyset}.
\end{displaymath}
Given an index $j\in \braces{1,\dots,d}$ and a closed set $A$ in
$\real{d}$, the \emph{singular} set of $\Psi$ is defined as
\begin{gather*}
  \Sigma^j_{A}(\Psi) \set \descr{x\in \dom \Psi}{\exists y_1,y_2 \in
    \Psi(x) \cap A \text{ with } y_1^j\not=y_2^j}.
\end{gather*}
We also write $\Sigma^j(\Psi) = \Sigma ^j_{\real{d}}(\Psi)$ and
$\Sigma(\Psi) = \cup_{j=1,\dots,d} \Sigma^j(\Psi)$.

Let $\map{f}{\real{d}}{\realext}$ be a closed convex function such
that its domain has a non-empty interior:
\begin{displaymath}
  D\set \interior\dom f \not= \emptyset. 
\end{displaymath}
It is well-known that
$\dom \nabla f \set \descr{x\in D}{\nabla f(x) \text{~exists}}$ is
dense in $D$ and
\begin{displaymath}
  D \setminus \dom \nabla f = \Sigma(\partial f) \cap D =
  \descr{x\in D}{\partial f(x) \text{ is not a point}}.  
\end{displaymath}
According to~\cite{Zajicek:79}, see also~\cite{Alberti:94}
and~\cite{Hajlasz:22}, this set of \emph{interior} singularities can
be covered by countable $c-c$ surfaces
$H_n = \descr{x\in \real{d}}{h_n(x) = 0}$, $n\geq 1$, where
\begin{displaymath}
  h_n(x) = x^j + g_{n,1}(x^{-j}) - g_{n,2}(x^{-j}), \quad
  x\in \real{d},    
\end{displaymath}
for some $j\in \braces{1,\dots,d}$ and finite convex functions
$g_{n,1}$ and $g_{n,2}$ on $\real{d-1}$.

Theorem~\ref{th:2} and Lemma \ref{lem:1} describe the
\emph{orientation} of the covering surface $H_n$ by showing that, at
any point, it has a normal vector $w$ with $w^j=1$, that belongs to
the cone $K$ generated by $F-F$, where
\begin{displaymath}
  F\set \cl \range \nabla f = 
  \descr{\lim_n \nabla f(x_n)}{x_n \in \dom
    \nabla f}.  
\end{displaymath}
Of course, this information has some value only if $K$ is
distinctively smaller than $\real{d}$. This is the case for
Fitzpatrick functions in the pseudo-Euclidean space $S$, where
according to~Theorem~\ref{th:3}, $K$ contains only $S$-non-negative
vectors. Proposition~\ref{prop:1} provides the geometric
interpretation of the $\range \nabla f$ and Lemma~\ref{lem:4} explains
the special feature of its closure $F$.

It turns out that the same surfaces $(H_n)$ also cover the
singularities of the Clarke-type subdifferential $\cpartial f$ defined
on the \emph{closure} of $D$:
\begin{displaymath}
  \cpartial f(x) \set  \cl \conv \descr{\lim_n \nabla
    f(x_n)}{\dom
    \nabla f \ni x_n \to x}, \quad x\in \cl D.     
\end{displaymath}
By Theorem~25.6 in~\cite{Rock:70},
\begin{displaymath}
  \partial f(x) = \cpartial f(x) + N_{\cl D}(x), \quad x\in \cl D, 
\end{displaymath}
where $N_A(x)$ denotes the normal cone to a closed convex set
$A\subset \real{d}$ at $x\in A$:
\begin{align*}
  N_{A}(x) \set & \descr{s\in \real{d}}{\ip{s}{y-x}\leq 0 \text{ for
                  all } y\in A} \\
  = & \descr{s\in \real{d}}{s \text{ is a regular normal vector to } A \text{ at }
      x}. 
\end{align*}
Recalling that $0\in N_{A}(x)$ for $x\in A$ and $N_{A}(x) =\braces{0}$
for $x\in \interior{A}$, we deduce that
\begin{align}
  \label{eq:7}
  \dom{\cpartial f} & = \dom{\partial f}, \\
  \label{eq:8}
  \cpartial f(x) & \subset \partial f(x), \quad x \in \cl D,  \\
  \label{eq:9}
  \cpartial f(x) & = \partial f(x), \quad x\in D.
\end{align}

The diameter of a set $E$ is denoted by
$\diam{E} \set \sup_{x,y\in E} \abs{x-y}$.

\begin{Theorem}
  \label{th:2}
  Let $\map{f}{\real{d}}{\realext}$ be a closed convex function with
  $D\set \interior\dom f \not= \emptyset$. Let
  $j\in \braces{1,\dots,d}$ and $A$ be a closed set in $\real{d}$
  containing $\range \nabla f$.  Then
  \begin{gather}
    \label{eq:10}
    \Sigma^j(\partial f) \cap D = \Sigma^j(\cpartial f) \cap D =
    \Sigma^j_A(\partial f)\cap
    D, \\
    \label{eq:11}
    \Sigma^j(\cpartial f) \subset \Sigma^j_A(\partial f).
  \end{gather}
  If $y^j = z^j$ for all $y,z \in A$, then, clearly,
  $\Sigma_A^j(\partial f)=\emptyset$. Otherwise, for every $n\geq 1$,
  there exist a compact set $C_n \subset A-A$ with $y^j > 0$,
  $y\in C_n$, and a function $h_n \in \mathcal{H}^j_{\theta^j(C_n)}$
  such that
  \begin{equation}
    \label{eq:12}
    \Sigma_A^j(\partial f) \subset
    \bigcup_n \descr{x\in \real{d}}{h_n(x)=0}.  
  \end{equation}
  For any $\epsilon>0$, all $C_n$ can be chosen so that
  $\diam{\theta^j(C_n)}< \epsilon$.
\end{Theorem}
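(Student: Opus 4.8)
The plan is to dispose of the identities \eqref{eq:10}--\eqref{eq:11} by elementary convex analysis, and then to obtain the covering \eqref{eq:12} by applying Theorem~\ref{th:1} to countably many pairs of small compact subsets of $A$.

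For \eqref{eq:10}, the key observation is that for $x\in D$ one has $\partial f(x)=\cpartial f(x)=\clconv G_x$, where $G_x\set\descr{\lim_n\nabla f(x_n)}{\dom\nabla f\ni x_n\to x}$, by \eqref{eq:9} and the definition of $\cpartial f$; moreover $G_x\subset\cl\range\nabla f\subset A$ since $A$ is closed and contains $\range\nabla f$. I will use the elementary fact that if $\clconv G$ contains two points with distinct $j$th coordinates, then so does $G$ (otherwise $G$, hence $\clconv G$, would lie in the hyperplane $\descr{y}{y^j=\const}$). Applying this to $G_x$ gives $\Sigma^j(\partial f)\cap D\subset\Sigma^j_A(\partial f)\cap D$; the reverse inclusion holds because $\partial f(x)\cap A\subset\partial f(x)$, and $\Sigma^j(\partial f)\cap D=\Sigma^j(\cpartial f)\cap D$ is \eqref{eq:9} again. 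For \eqref{eq:11} the argument is the same on $\cl D$, now using $\cpartial f(x)=\clconv G_x$ with $G_x\subset A$, the inclusion $\cpartial f(x)\subset\partial f(x)$ from \eqref{eq:8}, and $\dom\cpartial f=\dom\partial f$ from \eqref{eq:7}.

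If $y^j=z^j$ for all $y,z\in A$, then $\partial f(x)\cap A$ can never contain two points with distinct $j$th coordinates, so $\Sigma^j_A(\partial f)=\emptyset$. In the remaining case, fix $\epsilon>0$ and let $\mathcal{A}$ be the countable family of nonempty sets $A\cap\bar Q$, where $\bar Q$ ranges over all closed dyadic cubes of arbitrarily fine scale; each such set is compact, and every point of $A$ lies in members of $\mathcal{A}$ of arbitrarily small diameter. Let $\mathcal{P}$ be the countable collection of pairs $(E_1,E_2)$ in $\mathcal{A}$ with $\max_{y\in E_1}y^j<\min_{y\in E_2}y^j$ and $\diam\theta^j(E_2-E_1)<\epsilon$; in the present case $\mathcal{P}$ is infinite (shrink cubes around any two points of $A$ with distinct $j$th coordinates). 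Enumerate $\mathcal{P}=\braces{(E^1_n,E^2_n)}_{n\geq1}$, set $C_n\set E^2_n-E^1_n\subset A-A$, and apply Theorem~\ref{th:1} with $E^1_n$, $E^2_n$ in the roles of $C_1$, $C_2$: its hypothesis $y^j>0$ on $C_2-C_1$ is precisely the separation built into $\mathcal{P}$, and it produces $h_n\in\mathcal{H}^j_{\theta^j(C_n)}$ with $\Sigma_{E^1_n,E^2_n}(\partial f)\subset\descr{x}{h_n(x)=0}$. By construction $C_n$ is compact, $y^j>0$ for $y\in C_n$, and $\diam\theta^j(C_n)<\epsilon$.

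It remains to check that $\mathcal{P}$ covers $\Sigma^j_A(\partial f)$, which is the crux of the argument. Given $x\in\Sigma^j_A(\partial f)$, pick $y_1,y_2\in\partial f(x)\cap A$ with $y_1^j<y_2^j$, put $\delta=y_2^j-y_1^j$, and choose dyadic cubes $\bar Q_i\ni y_i$ of common side $2^{-k}$, so that $E_i\set A\cap\bar Q_i\in\mathcal A$ contains $y_i$. As $k\to\infty$, the $j$th coordinates of $E_1$ and $E_2$ stay within $2^{-k}$ of $y_1^j$ and $y_2^j$, hence separate once $2^{-k+1}<\delta$; meanwhile $E_2-E_1$ shrinks to $\{y_2-y_1\}$ while its $j$th coordinates remain bounded below by $\delta/2$, so by uniform continuity of $w\mapsto w/w^j$ on that region $\diam\theta^j(E_2-E_1)\to0$. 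Thus $(E_1,E_2)\in\mathcal{P}$ for $k$ large, and $x\in\Sigma_{E_1,E_2}(\partial f)$ since $y_i\in\partial f(x)\cap E_i$; this yields \eqref{eq:12}. The one delicate point is exactly this diameter control: because $A$ may be unbounded one cannot use a finite cover, and the $\theta^j$-distortion (division by the $j$th coordinate, which is amplified for large difference vectors) must be absorbed locally, scale by scale — which is why dyadic cubes of all scales are used rather than a fixed mesh. Everything else is routine.
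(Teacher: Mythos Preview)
Your proposal is correct and follows essentially the same route as the paper: reduce \eqref{eq:10}--\eqref{eq:11} to the fact that $\partial f(x)$ and $\cpartial f(x)$ are (closed) convex hulls of subsets of $\partial f(x)\cap A$, then cover $\Sigma^j_A(\partial f)$ by countably many sets $\Sigma_{C_1,C_2}(\partial f)$ with small compact $C_i\subset A$ and invoke Theorem~\ref{th:1}. The only cosmetic differences are that the paper derives the convex-hull identities via Lemmas~\ref{lem:4}--\ref{lem:5} (through the auxiliary function $f_A$) rather than directly from the definition of $\cpartial f$ and the closed-graph property of $\partial f$, and uses balls with rational radii centered at a dense sequence in $A$ instead of dyadic cubes for the countable cover.
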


Taking the unions over $j\in \braces{1,\dots,d}$, we obtain the
descriptions of the \emph{full} singular sets $\Sigma(\partial f)$ on
$D$, $\Sigma(\cpartial f)$, and $\Sigma_A(\partial f)$. Taking a
smaller $\epsilon>0$ in the last sentence of the theorem, we make the
directions of the normal vectors to the covering surface $H_n$ closer
to each other. As a result, $H_n$ gets approximated by a hyperplane.

Theorem~\ref{th:2} uses a larger closed set $A$ instead of $F$ to
allow for more flexibility in the treatment of singularities of
$\partial f$ on the boundary of $D$.  Lemma~\ref{lem:5} shows that
\begin{displaymath}
  \partial f(x) \cap A
  = \Arg_A(x) \set \argmax_{y\in A}\cbraces{\ip{x}{y} -
    f^*(y)}, \quad x \in \cl D. 
\end{displaymath}
In the framework of Fitzpatrick functions in
Section~\ref{sec:sing-points-fitzp}, $\Arg_A$ becomes a projection on
the monotone set $A$ in a pseudo-Euclidean space.

The proof of Theorem~\ref{th:2} relies on some lemmas. We start with a
simple fact from convex analysis.

\begin{Lemma}
  \label{lem:3}
  Let $\map{f}{\real{d}}{\realext}$ be a closed convex function
  attaining a strict minimum at a point $x_0$:
  \begin{displaymath}
    f(x_0)<f(x), \quad x\in \real{d}, \, x\not =x_0.
  \end{displaymath}
  Then
  \begin{displaymath}
    f(x_0) < \inf_{\abs{x-x_0} \geq \epsilon} f(x), \quad \epsilon >
    0.  
  \end{displaymath}
\end{Lemma}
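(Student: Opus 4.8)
\textbf{Proof proposal for Lemma~\ref{lem:3}.}
The plan is to argue by contradiction, using a minimizing sequence and the compactness of a sphere around $x_0$. First I would dispose of the degenerate case: if $f(x_0)=+\infty$ then $f\equiv +\infty$ and the right-hand infimum is also $+\infty$, so the asserted strict inequality is vacuous; hence I may assume $f(x_0)\in\real{}$. Fix $\epsilon>0$. Since $f(x_0)\leq f(x)$ for all $x$, certainly $\inf_{\abs{x-x_0}\geq\epsilon}f(x)\geq f(x_0)$, so I only need to rule out equality. Suppose, for contradiction, that $\inf_{\abs{x-x_0}\geq\epsilon}f(x)=f(x_0)$, and choose $x_n$ with $\abs{x_n-x_0}\geq\epsilon$ and $f(x_n)\to f(x_0)$.

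If the sequence $(x_n)$ were bounded, the argument would be immediate: a convergent subsequence $x_n\to\bar x$ would satisfy $\abs{\bar x-x_0}\geq\epsilon$, hence $\bar x\neq x_0$, and lower semicontinuity of the closed convex function $f$ would give $f(\bar x)\leq\liminf_n f(x_n)=f(x_0)$, contradicting the strict minimality at $x_0$. The only real point is that $(x_n)$ need not be bounded; this is the step that needs an idea, and I would resolve it by projecting each $x_n$ radially onto the sphere of radius $\epsilon$ centered at $x_0$. Concretely, set $\lambda_n\set\epsilon/\abs{x_n-x_0}\in(0,1]$ and $y_n\set(1-\lambda_n)x_0+\lambda_n x_n$, so that $\abs{y_n-x_0}=\epsilon$. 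By convexity,
\begin{displaymath}
  f(y_n)\leq(1-\lambda_n)f(x_0)+\lambda_n f(x_n)=f(x_0)+\lambda_n\bigl(f(x_n)-f(x_0)\bigr).
\end{displaymath}
Since $0\leq f(x_n)-f(x_0)\to 0$ and $\lambda_n\leq 1$, it follows that $\limsup_n f(y_n)\leq f(x_0)$.

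Now I would invoke compactness: the points $y_n$ all lie on the sphere $\descr{y\in\real{d}}{\abs{y-x_0}=\epsilon}$, which is compact, so along a subsequence $y_{n_k}\to\bar y$ with $\abs{\bar y-x_0}=\epsilon$, in particular $\bar y\neq x_0$. Lower semicontinuity of $f$ then yields $f(\bar y)\leq\liminf_k f(y_{n_k})\leq f(x_0)$, which contradicts the hypothesis that $f(x_0)<f(x)$ for every $x\neq x_0$. Hence $\inf_{\abs{x-x_0}\geq\epsilon}f(x)>f(x_0)$, as claimed. I expect no genuine obstacle here beyond the unboundedness of the minimizing sequence, and the radial-projection trick (using convexity to drag the far-away mass back onto the sphere while keeping the function value controlled) handles it cleanly.
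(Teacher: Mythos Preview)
Your proof is correct and follows essentially the same route as the paper: both argue by contradiction, radially project the minimizing sequence onto the sphere of radius $\epsilon$ around $x_0$ (your $y_n$ is exactly the paper's $z_n$), bound the function values there via convexity, and conclude by compactness and lower semicontinuity. The only cosmetic difference is that the paper uses the cruder bound $f(z_n)\leq\max(f(x_0),f(x_n))$ in place of your full convex-combination inequality.
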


\begin{proof}
  If the conclusion is not true, then there exist $\epsilon > 0$ and a
  sequence $(x_n)$ with $|x_n-x_0|\geq \epsilon$ such that
  $f(x_n)\rightarrow f(x_0)$.  As
  \begin{displaymath}
    z_n\set x_0+\frac{\epsilon}{\abs{x_n-x_0}}(x_n-x_0) 
  \end{displaymath}
  is a convex combination of $x_0$ and $x_n$, we deduce that
  \begin{displaymath}
    f(z_n) \leq \max(f(x_0),f(x_n)) = f(x_n) \to f(x_0). 
  \end{displaymath}
  By compactness, $z_n\rightarrow z_0$ over a subsequence. Clearly,
  $\abs{z_0-x_0}=\epsilon$, while by the lower semi-continuity,
  $f(z_0)\leq \liminf f(z_n)\leq f(x_0)$. We have arrived to a
  contradiction.
\end{proof}

The following result explains the special role played by
$\cl\range \nabla f$.  Recall the notation $f_A$ from~\eqref{eq:3}.

\begin{Lemma}
  \label{lem:4}
  Let $\map{f}{\real{d}}{\realext}$ be a closed convex function with
  $D\set \interior \dom f \not=\emptyset$ and $A$ be a closed set in
  $\real{d}$.  Then
  \begin{displaymath}
    f_A(x) = f(x), \, x\in \cl D  \iff  \range \nabla f \subset A. 
  \end{displaymath}
  In other words, $F\set \cl\range \nabla f$ is the minimal closed set
  such that $f_F=f$ on $\cl D$.
\end{Lemma}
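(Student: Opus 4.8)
The plan is to prove the two implications separately, using the Fenchel--Young relation~\eqref{eq:4}, the density of $\dom\nabla f$ in $D$, and the closure theorem for convex functions. Throughout I will use the elementary inequality $f_A\leq f_{\real{d}}=f^{**}=f$, valid on all of $\real{d}$ because $f_A$ is a supremum over a subset of $\real{d}$.

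For the implication $\impliedby$, assume $\range\nabla f\subset A$. First I would treat a point $x\in\dom\nabla f$: then $y\set\nabla f(x)$ lies in $A$ and, by~\eqref{eq:4}, $f(x)=\ip{x}{y}-f^*(y)$, so $f_A(x)\geq\ip{x}{y}-f^*(y)=f(x)$, and with the reverse inequality, $f_A(x)=f(x)$. In particular $A\cap\dom f^*\neq\emptyset$, so $f_A$ is a proper closed convex function, finite on $\dom f\supset D$, hence continuous on $D$; since $\dom\nabla f$ is dense in $D$ and $f$ is also continuous on $D$, this gives $f_A=f$ on $D$. To pass to $\cl D$, fix $x_0\in D$; then $x_0\in\ri\dom f\cap\ri\dom f_A$ because $D\subset\interior\dom f_A$, and for every $x\in\cl D$ the points $x_\lambda\set(1-\lambda)x_0+\lambda x$, $\lambda\in[0,1)$, lie in $D$. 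Applying Rockafellar's closure formula (Theorem~7.5 in~\cite{Rock:70}) to the closed convex functions $f$ and $f_A$ yields $f(x)=\lim_{\lambda\uparrow1}f(x_\lambda)=\lim_{\lambda\uparrow1}f_A(x_\lambda)=f_A(x)$.

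For the implication $\implies$, assume $f_A=f$ on $\cl D$ and fix $x\in\dom\nabla f\subset D$. The function $\psi(y)\set f^*(y)-\ip{x}{y}$ is proper closed convex, and by Fenchel--Young together with~\eqref{eq:4} it satisfies $\psi\geq-f(x)$ with $\psi(y)=-f(x)$ precisely for $y\in\partial f(x)=\braces{\nabla f(x)}$; thus $\psi$ attains a strict minimum at $y_0\set\nabla f(x)$, so Lemma~\ref{lem:3} applies. Now $f_A(x)=f(x)$ means $\inf_{y\in A}\psi(y)=\psi(y_0)$. If $y_0\notin A$, then, $A$ being closed, $\epsilon\set\dist{y_0}{A}>0$ and $A\subset\descr{y}{\abs{y-y_0}\geq\epsilon}$, whence Lemma~\ref{lem:3} gives $\inf_{y\in A}\psi(y)\geq\inf_{\abs{y-y_0}\geq\epsilon}\psi(y)>\psi(y_0)$, a contradiction. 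Hence $\nabla f(x)=y_0\in A$, i.e.\ $\range\nabla f\subset A$. The last assertion then follows at once: $f_F=f$ on $\cl D$ by $\impliedby$ since $\range\nabla f\subset F$, and if $A$ is closed with $f_A=f$ on $\cl D$ then $\range\nabla f\subset A$ by $\implies$, so $F=\cl\range\nabla f\subset A$.

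I expect the main obstacle to be the passage from $D$ to $\cl D$ in the direction $\impliedby$: two closed convex functions that agree on $D$ and satisfy $f_A\leq f$ need not agree on $\boundary D$ by lower semicontinuity alone, and the argument genuinely uses that both functions are the \emph{closures} of their restrictions to $D$, which is exactly the content of Rockafellar's line-segment/closure theorem. Everything else---the identity on $\dom\nabla f$, the density step, and the application of Lemma~\ref{lem:3} in the converse---is routine.
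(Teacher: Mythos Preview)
Your proof is correct and follows essentially the same route as the paper: both directions match the paper's argument almost verbatim, including the use of Lemma~\ref{lem:3} for $\implies$. The only difference is that for $\impliedby$ the paper simply asserts that the two closed convex functions agree on $\cl D$ once they agree on the dense set $\dom\nabla f$, whereas you spell out the passage from $D$ to $\cl D$ via Rockafellar's line-segment closure theorem; this is a welcome clarification of a step the paper leaves implicit.
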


\begin{proof}
  $\impliedby$: If $x \in \dom \nabla f$, then
  $y\set \nabla f(x)\in A$ and~\eqref{eq:4} yields that
  \begin{displaymath}
    f(x) = \ip{x}{y} - f^*(y) = f_A(x). 
  \end{displaymath}
  Since $\dom{\nabla f}$ is dense in $D$, the closed convex functions
  $f_A$ and $f$ coincide on $\cl D$.
 
  $\implies$: We fix $x_0\in \dom \nabla f$ and set
  $y_0\set \nabla f(x_0)$.  By the assumption, $f_A(x_0)=f(x_0)$.  If
  $y_0\notin A$, then the distance between $y_0$ and $A$ is at least
  $\epsilon >0$.  According to~\eqref{eq:4}, the concave upper
  semi-continuous function
  \begin{displaymath}
    y\rightarrow \ip{x_0}{y}-f^*(y)
  \end{displaymath}
  attains a strict global maximum at $y_0$ and has the maximum value
  $f(x_0)$.  By Lemma~\ref{lem:3},
  \begin{displaymath}
    \sup_{\abs{y-y_0} \geq  \epsilon}
    \cbraces{\ip{x_0}{y} - f^*(y)} <f(x_0).
  \end{displaymath} 
  As $A\subset \descr{y\in \real{d}}{|y-y_0|\geq \epsilon}$, we arrive
  to a contradiction:
  \begin{displaymath}
    f_A(x_0)=\sup_{y\in A}\cbraces{\ip{x_0}{y} - f^*(y)}    \leq
    \sup_{\abs{y-y_0}\geq  \epsilon} 
    \cbraces{\ip{x_0}{y} - f^*(y)} <f(x_0).
  \end{displaymath}
  Hence, $y_0 \in A$, as required.
\end{proof}

\begin{Lemma}
  \label{lem:5}
  Let $\map{f}{\real{d}}{\realext}$ be a closed convex function with
  $D\set \interior \dom f \not=\emptyset$. Let $A$ be a closed set in
  $\real{d}$ such that $f=f_A$ on $\cl D$.  Then
  \begin{gather*}
    \partial f(x) \cap A = \Arg_A(x) \set \argmax_{y\in
      A}\cbraces{\ip{x}{y} -
      f^*(y)}, \quad x \in \cl D, \\
    \dom \Arg_A \cap \cl D
    = \dom \partial f=\dom \cpartial f, \\
    \cpartial f(x) = \partial f(x) = \conv{\cbraces{\partial f(x) \cap
        A}}, \quad
    x\in D, \\
    \cpartial f(x) \subset \cl\conv\cbraces{\partial f(x) \cap A},
    \quad x\in \dom \cpartial f.
  \end{gather*}
\end{Lemma}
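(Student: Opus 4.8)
The lemma lists four assertions, and the plan is to read each of them off from the Fenchel equality~\eqref{eq:4}, the relations~\eqref{eq:7}--\eqref{eq:9} linking $\partial f$ and $\cpartial f$, and Lemma~\ref{lem:4}. Write $L(x)\set\descr{\lim_n\nabla f(x_n)}{\dom\nabla f\ni x_n\to x}$, so that $\cpartial f(x)=\cl\conv L(x)$ for $x\in\cl D$ by definition. The one preliminary fact I would record is that, by Lemma~\ref{lem:4}, the hypothesis ``$f=f_A$ on $\cl D$'' is equivalent to $\range\nabla f\subset A$; since $A$ is closed, this gives $L(x)\subset A$. Combining this with~\eqref{eq:8} and the elementary inclusions $\dom\partial f\subset\dom f\subset\cl\dom f=\cl D$ (recall $\dom f$ is convex with nonempty interior $D$), every point that will matter lies in $\cl D$ and we obtain the key chain
\begin{displaymath}
  L(x)\subset\cpartial f(x)\subset\partial f(x), \qquad\text{hence}\qquad L(x)\subset\partial f(x)\cap A, \quad x\in\cl D.
\end{displaymath}

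For the first identity, fix $x\in\cl D$. By hypothesis $f(x)=f_A(x)=\sup_{y\in A}\cbraces{\ip{x}{y}-f^*(y)}$, so $y\in\partial f(x)\cap A$ if and only if $y\in A$ and $f(x)=\ip{x}{y}-f^*(y)$ by~\eqref{eq:4}, and this last equality says precisely that $y$ attains the supremum defining $f_A(x)=f(x)$, that is, $y\in\Arg_A(x)$; hence $\partial f(x)\cap A=\Arg_A(x)$. For the domain identity, $\dom\partial f=\dom\cpartial f$ is~\eqref{eq:7}; by the identity just proved, for $x\in\cl D$ one has $x\in\dom\Arg_A$ iff $\partial f(x)\cap A\neq\emptyset$, which trivially implies $x\in\dom\partial f$; conversely, if $x\in\dom\partial f$ then $x\in\cl D$ and, by~\eqref{eq:7}, $\cpartial f(x)=\cl\conv L(x)\neq\emptyset$, forcing $L(x)\neq\emptyset$, so the key chain gives $\emptyset\neq L(x)\subset\partial f(x)\cap A$, i.e.\ $x\in\dom\Arg_A\cap\cl D$.

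For $x\in D$, $\cpartial f(x)=\partial f(x)$ is~\eqref{eq:9}. Since $x$ is interior to $\dom f$, $\partial f(x)$ is compact, hence so is $\partial f(x)\cap A$, and therefore $\conv\cbraces{\partial f(x)\cap A}$ is closed; the key chain and~\eqref{eq:9} then yield
\begin{displaymath}
  \partial f(x)=\cpartial f(x)=\cl\conv L(x)\subset\cl\conv\cbraces{\partial f(x)\cap A}=\conv\cbraces{\partial f(x)\cap A}\subset\partial f(x),
\end{displaymath}
so all these sets coincide. Finally, for $x\in\dom\cpartial f$ the last assertion is immediate from the key chain: $\cpartial f(x)=\cl\conv L(x)\subset\cl\conv\cbraces{\partial f(x)\cap A}$.

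I do not anticipate a genuine obstacle here. The only points requiring care are the appeal to Lemma~\ref{lem:4} (to pass from ``$f=f_A$ on $\cl D$'' to $\range\nabla f\subset A$, and then, using closedness of $A$, to pull limits of gradients back into $A$), and the routine bookkeeping that every point with nonempty $\partial f$ already lies in $\cl D$, so that~\eqref{eq:7}--\eqref{eq:9} and the definition of $\cpartial f$ are applicable throughout; no compactness beyond that of $\partial f(x)$ for interior $x$ is needed.
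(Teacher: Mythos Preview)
Your proof is correct and uses the same ingredients as the paper---Lemma~\ref{lem:4}, the Fenchel equality~\eqref{eq:4}, and the relations~\eqref{eq:7}--\eqref{eq:9}---with the same overall logic for the first, second, and fourth assertions.

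The one genuine difference is in the third assertion, $\partial f(x)=\conv\cbraces{\partial f(x)\cap A}$ for $x\in D$. The paper localizes: it picks a ball $B(x_0,\epsilon)\subset D$, uses local boundedness of $\partial f$ to truncate $A$ to a compact $A_K$, observes that $f=f_{A_K}$ on that ball, and then invokes Lemma~\ref{lem:2} (the envelope lemma for compact $C$) to conclude $\partial f(x_0)=\conv\cbraces{\partial f_{A_K}(x_0)\cap A_K}$. You instead argue directly from the definition $\cpartial f(x)=\cl\conv L(x)$ together with~\eqref{eq:9} and the inclusion $L(x)\subset\partial f(x)\cap A$, closing the chain by noting that the convex hull of the compact set $\partial f(x)\cap A$ is already closed. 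Your route is shorter and avoids Lemma~\ref{lem:2} entirely; the paper's route has the minor advantage of making the connection to the envelope structure $f_C$ explicit, which is the theme of the surrounding section.
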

\begin{proof}
  If $x \in \dom \Arg_A \cap \cl D$ and $y\in \Arg_A(x)$, then
  \begin{equation}
    \label{eq:13}
    f(x) = f_A(x) = \ip{x}{y} - f^*(y).
  \end{equation}
  Hence, $y\in \partial f(x) \cap A$, by the properties of
  subdifferentials in~\eqref{eq:4}.

  Conversely, let $x\in \dom \partial f$.  Lemma~\ref{lem:4} shows
  that $F\set \cl\range \nabla f \subset A$. Accounting
  for~\eqref{eq:7} and~\eqref{eq:8} and the definition of
  $\cpartial f(x)$, we obtain that
  \begin{displaymath}
    \partial f(x) \cap A \supset \partial f(x) \cap F
    \supset \cpartial f(x)\cap F\not=\emptyset. 
  \end{displaymath}
  Let $y\in \partial f(x) \cap A$.  From~\eqref{eq:4} we
  deduce~\eqref{eq:13} and then that $y \in \Arg_A(x)$. We have proved
  the first two assertions of the lemma.

  The fact that $\cpartial f = \partial f$ on $D$ has been already
  stated in~\eqref{eq:9}. To prove the second equality in the third
  assertion, we fix $x_0\in D$ and choose $\epsilon>0$ such that
  \begin{displaymath}
    B(x_0,\epsilon) \set \descr{x\in \real{d}}{\abs{x-x_0}\leq
      \epsilon} \subset D. 
  \end{displaymath}
  The uniform boundedness of $\partial f$ on compacts in $D$ implies
  the existence of a constant $K>0$ such that
  \begin{displaymath}
    \abs{y} \leq K<\infty, \quad y\in \partial f(x), \, x\in
    B(x_0,\epsilon). 
  \end{displaymath}
  Denote by $A_K \set A \cap \descr{x\in \real{d}}{\abs{x}\leq K}$.
  As $\range \nabla f \subset A$, we deduce from~\eqref{eq:4} that
  \begin{displaymath}
    f(x)=f_{A_K}(x), \quad x\in B(x_0,\epsilon)\cap \dom \nabla f, 
  \end{displaymath}
  and then, by the density of $\dom \nabla f$ in $D$, that $f=f_{A_K}$
  on $B(x_0,\epsilon)$.  Finally, Lemma~\ref{lem:2} shows that
  \begin{displaymath}
    \partial f(x_0) = \partial f_{A_K}(x_0) = \conv\cbraces{\partial
      f_{A_K}(x_0) \cap A_K} = \conv\cbraces{\partial f(x_0) \cap A}. 
  \end{displaymath}

  If $\dom \nabla f \ni x_n \to x$ and $\nabla f(x_n) \to y$, then,
  clearly, $y\in F \subset A$. Moreover, $y \in \partial f(x)$, by the
  continuity of subdifferentials. The last assertion of the lemma
  readily follows.
\end{proof}

We are ready to finish the proof of Theorem~\ref{th:2}.

\begin{proof}[Proof of Theorem~\ref{th:2}]
  Lemma~\ref{lem:4} shows that $f=f_A$ on $\cl D$. Then, by
  Lemma~\ref{lem:5},
  \begin{gather*}
    \cpartial f(x) = \partial f(x) = \conv{\cbraces{\partial f(x) \cap
        A}}, \quad
    x\in D, \\
    \cpartial f(x) \subset \cl\conv\cbraces{\partial f(x) \cap A},
    \quad x\in \dom \cpartial f.
  \end{gather*}
  Recalling that $\dom{\partial f} = \dom{\cpartial f}$, we
  deduce~\eqref{eq:10} and~\eqref{eq:11}.
  
  Fix $\epsilon >0$.  Let $(x_n)$ be a dense sequence in $A$ and
  $(r_n)$ be an enumeration of all positive rationals.  Denote by
  $\alpha\set (m, n,k,l)$ the indexes for which the compacts
  \begin{displaymath}
    C^{\alpha}_1 \set\descr{x\in A }{ |x-x_m|\leq r_k}, \quad
    C^{\alpha}_2 \set\descr{x\in A}{|x-x_n|\leq r_l},
  \end{displaymath}
  satisfy the constraints:
  \begin{displaymath}
    \diam  \theta^j(C^{\alpha}_2-C^{\alpha}_1) < \epsilon \text{~and~}
    x^j > 0, \; x \in C_2^{\alpha} -
    C_1^{\alpha}.
  \end{displaymath}  
  We have that
  \begin{displaymath}
    \Sigma^j_A(\partial f) = \bigcup_{\alpha} \Sigma_{C^{\alpha}_1,
      C^{\alpha}_2}(\partial f) = \bigcup_{\alpha}  \descr{x\in
      \real{d}}{\partial f(x) \cap C^{\alpha}_i \not  
      = \emptyset, \, i=1,2}.
  \end{displaymath}
  For every index $\alpha$, Theorem~\ref{th:1} yields a function
  $h\in \mathcal H^j_{\theta^j(C^{\alpha}_2 - C^{\alpha}_1)}$ such
  that
  \begin{displaymath}
    \Sigma_{C^{\alpha}_1, C^{\alpha}_2}(\partial f)\subset \descr{x\in
      \real{d}}{h(x)=0}. 
  \end{displaymath}
  We have proved~\eqref{eq:12} and with it the theorem.
\end{proof}

We conclude the section with the geometric interpretation of
$\range \nabla f$. For a closed convex function
$\map{f}{\real{d}}{\realext}$, we denote by $\epi f$ its epigraph:
\begin{displaymath}
  \epi f \set \descr{(x,q) \in \real{d}\times \real{}}{f(x) \leq q}. 
\end{displaymath}
Let $E$ be a closed convex set in $\real{d}$. A point $x_0\in E$ is
called \emph{exposed} if there is a hyperplane intersecting $E$ only
at $x_0$. In other words, there is $y_0 \in \real{d}$ such that
\begin{displaymath}
  \ip{x-x_0}{y_0} > 0, \quad x\in E \setminus \braces{x_0}. 
\end{displaymath}

\begin{Proposition}
  \label{prop:1}
  Let $\map{f}{\real{d}}{\realext}$ be a closed convex function with
  $\interior{\dom f} \not= \emptyset$.  Then
  \begin{displaymath}
    \descr{\cbraces{y, f^*(y)} }{y\in \range \nabla f} = \text{
      exposed points of } \epi f^*. 
  \end{displaymath}
\end{Proposition}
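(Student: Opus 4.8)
The plan is to prove the two inclusions separately, using one elementary fact from conjugate duality. Since $f$ is closed convex, $f^{**}=f$, so for any $x_0\in\real{d}$ a point $y_0$ minimizes the closed convex function $y\mapsto f^*(y)-\ip{x_0}{y}$ over $\real{d}$ if and only if $\ip{x_0}{y_0}-f^*(y_0)=\sup_y\cbraces{\ip{x_0}{y}-f^*(y)}=f(x_0)$, which by~\eqref{eq:4} holds exactly when $y_0\in\partial f(x_0)$; thus the set of minimizers of this function is precisely $\partial f(x_0)$. I will also use the classical facts (see \cite[Theorems~23.4 and~25.1]{Rock:70}) that $\partial f(x_0)$ is a nonempty singleton if and only if $x_0\in D$ and $f$ is differentiable at $x_0$, in which case $\partial f(x_0)=\braces{\nabla f(x_0)}$. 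The left-hand side of the claimed identity is, by definition, contained in the graph $\descr{\cbraces{y,f^*(y)}}{y\in\dom f^*}\subset\real{d}\times\real{}$, and I will show along the way that exposed points of $\epi f^*$ also lie on this graph; the content is then the matching of the first coordinates.

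For the inclusion $\subseteq$: given $x_0\in\dom\nabla f\subset D$, set $y_0\set\nabla f(x_0)$, so that $\partial f(x_0)=\braces{y_0}$. I would check that $(-x_0,1)\in\real{d}\times\real{}$ exposes $\cbraces{y_0,f^*(y_0)}$ in $\epi f^*$: for $(y,q)\in\epi f^*$ one has $q\ge f^*(y)$, hence
\begin{displaymath}
  \ip{-x_0}{y-y_0}+\cbraces{q-f^*(y_0)}\;\ge\;\cbraces{f^*(y)-\ip{x_0}{y}}-\cbraces{f^*(y_0)-\ip{x_0}{y_0}},
\end{displaymath}
with equality only if $q=f^*(y)$. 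Since, by the dictionary above, $y_0$ is the \emph{unique} minimizer of $y\mapsto f^*(y)-\ip{x_0}{y}$, the right-hand side is $\ge 0$ and vanishes only at $y=y_0$; together with $q-f^*(y_0)>0$ when $y=y_0$ and $q>f^*(y_0)$, this gives the strict exposure inequality at every point of $\epi f^*$ other than $\cbraces{y_0,f^*(y_0)}$.

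For the inclusion $\supseteq$: let $\cbraces{y_0,q_0}$ be an exposed point of $\epi f^*$ with exposing direction $(a,b)\in\real{d}\times\real{}$, so that $\ip{a}{y-y_0}+b(q-q_0)>0$ for all $(y,q)\in\epi f^*\setminus\braces{\cbraces{y_0,q_0}}$. Because $\epi f^*$ contains the upward ray $\braces{y_0}\times[f^*(y_0),\infty)$, testing the inequality along it forces first $b>0$ and then $q_0=f^*(y_0)$ (otherwise the point $\cbraces{y_0,f^*(y_0)}$ would violate it). Normalizing $b=1$ and setting $x_0\set-a$, the exposure inequality restricted to the graph of $f^*$ reads $f^*(y)-\ip{x_0}{y}>f^*(y_0)-\ip{x_0}{y_0}$ for every $y\ne y_0$, that is, $y_0$ is the unique minimizer of $y\mapsto f^*(y)-\ip{x_0}{y}$. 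By the dictionary, $\partial f(x_0)=\braces{y_0}$; hence $x_0\in D$, $f$ is differentiable at $x_0$, and $y_0=\nabla f(x_0)\in\range\nabla f$.

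The verifications I would treat as routine are that $f$ proper closed convex makes $\epi f^*$ a genuine nonempty closed convex set (and $f^*(\nabla f(x_0))<\infty$) and that $\dom\nabla f\subset D$ is immediate from the definitions. The one step needing a little care is the opening of the $\supseteq$ direction: ruling out $b\le 0$ and $q_0>f^*(y_0)$, so that the exposing direction can be normalized to $(-x_0,1)$ and the exposed point is seen to lie on the graph; once that is done, the identification of the first coordinate with a value of $\nabla f$ is a direct consequence of the minimizer dictionary together with the boundedness characterization of $D$ via the subdifferential.
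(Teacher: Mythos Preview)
Your proof is correct and follows essentially the same approach as the paper's: both arguments identify an exposed point of $\epi f^*$ with exposing direction normalized to $(-x_0,1)$ (after observing that the last coordinate must be positive and that the exposed point must sit on the graph of $f^*$), and then translate the strict exposure inequality into the statement that $y_0$ is the \emph{unique} minimizer of $y\mapsto f^*(y)-\ip{x_0}{y}$, i.e., that $\partial f(x_0)=\braces{y_0}$, equivalently $y_0=\nabla f(x_0)$. The paper compresses this into a single chain of equivalences, while you write out the two inclusions separately and are more explicit about why $b>0$ and $q_0=f^*(y_0)$, but the content is the same.
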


\begin{proof}
  By definition, $(y_0,r_0)$ is an exposed point of $\epi f^*$ if it
  belongs to $\epi f^*$ and
  \begin{displaymath}
    \ip{y-y_0}{x_0} + (r-r_0) q_0 > 0, \quad (y,r) \in \epi f^*
    \setminus \braces{(y_0,r_0)}, 
  \end{displaymath}
  for some $(x_0,q_0)\in \real{d}\times \real{}$.  The definition of
  $\epi f^*$ ensures that $q_0>0$ and $r_0=f^*(y_0)$. Rescaling
  $(x_0,q_0)$ so that $q_0=1$, we deduce that the function
  \begin{displaymath}
    {y} \to {\ip{x_0}{y} +f^*(y)}
  \end{displaymath}
  has the unique minimizer $y_0$.  This is equivalent to $y_0$ being
  the only element of $\partial f(-x_0)$, which in turn is equivalent
  to $-x_0\in \dom \nabla f$ and $y_0=\nabla f(-x_0)$.
\end{proof}

\section{Singular points of projections in $S$-spaces}
\label{sec:sing-points-fitzp}

We denote by $\msym{d}{m}$ the family of symmetric
$d\times d$-matrices of full rank with $m \in \braces{0,1,\dots,d}$
positive eigenvalues. For $S \in \msym{d}{m}$, the bilinear form
\begin{displaymath}
  S(x,y) \set \ip{x}{Sy}  = \sum_{i,j=1}^d x^i S_{ij}y^j, \quad
  x,y\in \real{d},
\end{displaymath}
defines the scalar product on a pseudo-Euclidean space $\real{d}_m$
with dimension $d$ and index $m$, which we call the $S$-space. The
quadratic form $S(x,x)$ is the \emph{scalar square} on the $S$-space;
its value may be negative.

For a closed set $G\subset \real{d}$, we define the
\emph{Fitzpatrick-type} function
\begin{displaymath}
  \psi_G (x)\set  \sup _{y\in G}\cbraces{S(x,y)-\frac 12S(y,y)} \in
  \realext, 
  \quad x \in \real{d}, 
\end{displaymath}
and the projection multi-function
\begin{align*}
  P_G (x)\set & \argmin_{y\in G}S(x-y, x-y) = \argmax_{y\in G}\cbraces{S(x,y)-\frac 12S(y,y)}, \quad x \in \real{d}.
\end{align*}
Clearly, $\psi_G$ is a closed convex function and $P_G$ takes values
in the closed (possibly empty) subsets of $G$.

A closed set $G\subset \real{d}$ is called $S$-\emph{monotone} or
$S$-\emph{positive} if
\begin{displaymath}
  S(x-y,x-y)  \geq 0, \quad x, y
  \in G,   
\end{displaymath}
or, equivalently, if its projection multi-function has the natural
fixed-point property:
\begin{displaymath}
  x\in P_G(x), \quad x\in G. 
\end{displaymath}
We denote by $\mathcal{M}(S)$ the family of closed non-empty
$S$-monotone sets in $\real{d}$.  We refer to \citet{Fitzpatrick:88},
\citet{Simons:07}, and~\citet{Penot:09} for the results on Fitzpatrick
functions $\psi_G$ associated with $G \in \mathcal{M}(S)$.

\begin{Example}[Standard form]
  \label{ex:1}
  If $d=2m$ and
  \begin{displaymath}
    S(x,y) = \sum_{i=1}^m \cbraces{x^i y^{m+i} + x^{m+i}y^i},  \quad
    x,y\in \real{2m}, 
  \end{displaymath}
  then $S \in \msym{2m}m$ and the $S$-monotonicity means the standard
  monotonicity in $\real{2m}=\real{m}\times \real{m}$. For a
  \emph{maximal} monotone set $G$, the function $\psi_G$ becomes the
  classical Fitzpatrick function from~\cite{Fitzpatrick:88}.
\end{Example}

\begin{Example}[Canonical form]
  \label{ex:2}
  If $\Lambda$ is the \emph{canonical} quadratic form in $\msym{d}m$:
  \begin{displaymath}
    \Lambda(x,y) = \sum_{i=1}^m x^i y^i  - \sum_{i=m+1}^d x^i y^i, \quad
    x,y\in \real{d},  
  \end{displaymath}
  then a closed set $G$ is $\Lambda$-monotone if and only if
  \begin{displaymath}
    G = \graph{f} \set \descr{(u,f(u))}{u\in D}, 
  \end{displaymath}
  where $D$ is a closed set in $\real{m}$ and $\map{f}{D}{\real{d-m}}$
  is a $1$-Lipschitz function:
  \begin{displaymath}
    \abs{f(u)-f(v)} \leq \abs{u-v}, \quad u,v \in D.  
  \end{displaymath}
  In view of the Kirszbraun Theorem, \cite[2.10.43]{Feder:69}, $G$ is
  maximal $\Lambda$-monotone if and only if $D=\real{m}$.
\end{Example}

While working on the $S$-space, it is convenient to use appropriate
versions of subdifferential and Clark-type subdifferential for a
convex function $f$. For $x\in \cl{\dom{f}}$, they are defined as
\begin{align*}
  \partial^Sf(x) \set & \descr{y\in \real d}{S(z,y)\leq f(x+z)-f(x), \,
                        z\in \real{d}} \\
  = & \descr{S^{-1}y\in \real d}{\ip{z}{y} \leq f(x+z)-f(x), \,
      z\in \real{d}} \\
  = & \; S^{-1} \partial f(x), \\
  \cpartial^Sf(x) \set & S^{-1}\cpartial f(x). 
\end{align*}

\begin{Lemma}
  \label{lem:6}
  Let $S\in \msym{d}{m}$, $G\in \mathcal{M}(S)$, and assume that
  $D\set \interior\dom{\psi_G} \not=\emptyset$.  Then
  \begin{gather*}
    \range{S^{-1}\nabla \psi_G} \subset G, \\
    \dom{P_G} = \dom{\partial^S \psi_G}  = \dom{\cpartial^S \psi_G},
  \end{gather*}
  and
  \begin{gather*}
    \psi_G(x) = \psi_G^*(Sx) = \frac12 S(x,x), \quad x\in G, \\
    P_G(x) = \partial^S \psi_G(x) \cap G, \quad x\in \real{d}.
  \end{gather*}
\end{Lemma}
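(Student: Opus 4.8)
The plan is to recognize $\psi_G$ as a function of the form $f_A$ from~\eqref{eq:3}, with $f=\psi_G$ itself and $A$ the closed set $SG\set\descr{Sy}{y\in G}$, and then to transport the conclusions back to $G$ through the linear homeomorphism $w=Sy$, reading the statements off Lemmas~\ref{lem:4} and~\ref{lem:5}. The only genuinely new input is the evaluation of $\psi_G$ on $G$ and of $\psi_G^*$ on $SG$.

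I would start with the last group of identities. The elementary relation $S(x,y)-\frac12 S(y,y)=\frac12 S(x,x)-\frac12 S(x-y,x-y)$, together with $S(x-y,x-y)\ge 0$ for $x,y\in G$, gives $\psi_G(x)=\frac12 S(x,x)$ for $x\in G$. For such $x$ and any $z\in\real{d}$ the same relation yields $\psi_G(x+z)\ge S(x+z,x)-\frac12 S(x,x)=\psi_G(x)+\ip{z}{Sx}$, so that $Sx\in\partial\psi_G(x)$; the Fenchel equality in~\eqref{eq:4} then gives $\psi_G^*(Sx)=\ip{x}{Sx}-\psi_G(x)=\frac12 S(x,x)$.

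Writing $\psi_G(x)=\sup_{y\in G}\cbraces{\ip{x}{Sy}-\frac12 S(y,y)}$ and substituting $w=Sy$, a homeomorphism of $G$ onto the closed set $SG$, one obtains $\psi_G(x)=\sup_{w\in SG}\cbraces{\ip{x}{w}-\frac12\ip{w}{S^{-1}w}}$; since $\frac12\ip{w}{S^{-1}w}=\psi_G^*(w)$ for $w\in SG$ by the previous step, this is exactly $\psi_G=(\psi_G)_{SG}$ on $\real{d}$. As $\interior\dom\psi_G\ne\emptyset$, Lemma~\ref{lem:4} with $A=SG$ gives $\range\nabla\psi_G\subset SG$, i.e.\ $\range\cbraces{S^{-1}\nabla\psi_G}\subset G$. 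Lemma~\ref{lem:5} with $A=SG$ then yields $\partial\psi_G(x)\cap SG=\Arg_{SG}(x)$ for $x\in\cl D$ and $\dom\Arg_{SG}\cap\cl D=\dom\partial\psi_G=\dom\cpartial\psi_G$. Under $w=Sy$ one computes $\Arg_{SG}(x)=\descr{Sy}{y\in P_G(x)}$, again using $\psi_G^*(Sy)=\frac12 S(y,y)$; hence $\dom\Arg_{SG}=\dom P_G$, and $\dom P_G\subset\cl D$ because $\psi_G$ is finite wherever its defining supremum is attained. Applying $S^{-1}$ turns the first identity into $\partial^S\psi_G(x)\cap G=P_G(x)$ for $x\in\cl D$; for $x\notin\cl D=\cl\dom\psi_G$ both sides are empty, since then $\psi_G(x)=+\infty$ is not attained and $\partial\psi_G(x)=\emptyset$. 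Thus $P_G=\partial^S\psi_G\cap G$ on all of $\real{d}$. Finally, $S^{-1}$ being a bijection, $\dom\partial^S\psi_G=\dom\partial\psi_G$ and $\dom\cpartial^S\psi_G=\dom\cpartial\psi_G$, which coincide by~\eqref{eq:7}; and since $\dom P_G\subset\cl D$, the domain identity of Lemma~\ref{lem:5} reduces to $\dom P_G=\dom\Arg_{SG}=\dom\partial\psi_G$, completing the chain $\dom P_G=\dom\partial^S\psi_G=\dom\cpartial^S\psi_G$.

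I expect the main obstacle to be bookkeeping rather than conceptual: faithfully carrying the change of variables $w=Sy$ through the conjugate $\psi_G^*$, the subdifferential, and the $\Arg$-map, and separately checking $P_G=\partial^S\psi_G\cap G$ at points outside $\cl D$, which Lemma~\ref{lem:5} does not reach directly. The one substantive point — that $\psi_G^*$ restricts to the (generally indefinite) quadratic $\frac12\ip{\cdot}{S^{-1}\cdot}$ on $SG$ — is elementary, following from $S$-monotonicity together with the Fenchel equality.
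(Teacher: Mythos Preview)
Your proof is correct and follows essentially the same route as the paper's: compute $\psi_G$ and $\psi_G^*$ on $G$ and $SG$, recognize $\psi_G=(\psi_G)_{SG}$, and then read everything off Lemmas~\ref{lem:4} and~\ref{lem:5} together with the observation that both $P_G$ and $\partial^S\psi_G$ are empty outside $\cl D$. The one minor difference is that you obtain $\psi_G^*(Sx)=\tfrac12 S(x,x)$ directly from $Sx\in\partial\psi_G(x)$ and the Fenchel equality~\eqref{eq:4}, whereas the paper uses a biconjugate sandwich $\psi_G(S^{-1}\cdot)\le\psi_G^*=g^{**}\le g$; your variant is slightly more direct.
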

\begin{proof}
  We write $\psi_G$ as
  \begin{displaymath}
    \psi_G(x) = g^*(x) \set \sup_{y\in \real{d}}(\ip{x}{y} - g(y)), \quad
    x\in \real{d}, 
  \end{displaymath}
  where $g(y)=\frac{1}{2}S^{-1}(y,y)$ for $y\in SG$ and
  $g(y) = +\infty$ for $y\not\in SG$.  From the definition of
  $\psi _G$ and the $S$-monotonicity of $G$ we deduce that
  \begin{displaymath}
    \psi _G(x)=\frac12 S(x,x) - \frac12 \inf_{y\in
      G} S(x-y,x-y) = \frac12 S(x,x), \quad x\in G, 
  \end{displaymath}
  and then that
  \begin{displaymath}
    \psi _G(S^{-1}x)  \leq g(x), \quad x\in
    \real{d}. 
  \end{displaymath}
  As $\psi^*_G=g^{**}$ and $g^{**}$ is the largest closed convex
  function less than $g$, we have that
  \begin{displaymath}
    \psi _G(S^{-1}x) \leq \psi ^*_G(x)\leq g(x), \quad x\in \real{d}.
  \end{displaymath}
  Putting together the relations above, we obtain that
  \begin{displaymath}
    \frac 12 S(x,x)=\psi _G(x)\leq \psi ^*_G(Sx)\leq
    g(Sx)=\frac{1}{2}S(x,x), \quad x\in G.  
  \end{displaymath}
  For every $x\in \real{d}$, the values of the Fitzpatrick function
  $\psi_G$ and of the projection multi-function $P_G$ can now be
  written as
  \begin{align*}
    \psi_G(x) & = \sup_{y\in G} \cbraces{S(x,y) - \psi_G^*(Sy)} =
                \sup_{z\in SG} \cbraces{\ip{x}{z} - \psi_G^*(z)}, \\ 
    P_G(x) & = \argmax_{y\in G}\cbraces{S(x,y)-\frac 12S(y,y)} = S^{-1}
             \argmax_{z\in SG}\cbraces{\ip{x}{z}-\psi^*_G(z)}.
  \end{align*}
  The stated relations between $P_G$ and $\partial^S \psi_G$ follow
  from Lemma~\ref{lem:5} as soon as we observe that
  $P_G(x)= \partial^S \psi_G(x)=\emptyset$ for $x\notin \cl D$. The
  equality of the domains of $\partial^S \psi_G$ and
  $\cpartial^S \psi_G$ is just a restatement of~\eqref{eq:7}.
  Finally, Lemma~\ref{lem:4} yields the inclusion of
  $\range \nabla \psi_G$ into $SG$.
\end{proof}

To facilitate geometric interpretations, we also adapt the concept of
a normal vector to the product structure of the $S$-space.  Let $H$ be
a closed set in $\real{d}$.  A vector $w\in \real{d}$ is called
$S$-\emph{regular normal to $H$ at $x\in H$} if
\begin{displaymath}
  \limsup_{\substack{H\ni y\to x \\ y\not= x}}
  \frac{S(w,y-x)}{\abs{y-x}} =
  \limsup_{\substack{H\ni y\to x \\y\not=
      x}} \frac{\ip{Sw}{y-x}}{\abs{y-x}} \leq 0.   
\end{displaymath}
A vector $w\in \real{d}$ is called $S$-\emph{normal to $H$ at $x$} if
there exist $x_n\in H$ and an $S$-regular normal vector $w_n$ to $H$
at $x_n$, such that $x_n\rightarrow x$ and $w_n\rightarrow w$.  In
other words, $w$ is $S$-(regular) normal to $H$ at $x\in H$ if $Sw$ is
(regular) normal to $H$ at $x$ in the classical Euclidean sense.  It
is easy to see that if $x\in P_H(z)$, then the vector $z-x$ is
$S$-regular normal to $H$ at $x$.

\begin{Lemma}
  \label{lem:7}
  Let $S\in \msym{d}{m}$, $j\in \braces{1,\dots,d}$,
  $C\in \mathcal{C}^j$, $h$ be given by~\eqref{eq:1} for convex
  functions $g_1$ and $g_2$ with linear growth, and $H$ be the
  zero-level set of the composition function $h\circ S$:
  \begin{displaymath}
    H \set \descr{x\in \real{d}}{h(Sx) = 0}.
  \end{displaymath}
  Then $h\in \mathcal{H}^j_C$ if and only if for every $x\in H$, there
  exists an $S$-normal vector $w\in C$ to $H$ at $x$.
\end{Lemma}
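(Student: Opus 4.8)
The plan is to deduce Lemma~\ref{lem:7} from Lemma~\ref{lem:1} by pushing everything through the invertible linear map $S$. First I would introduce $\widetilde{H}\set\descr{z\in\real{d}}{h(z)=0}$, the zero-level set of $h$ itself, and record that $H=S^{-1}\widetilde{H}$, so that $x\mapsto Sx$ is a homeomorphism of $H$ onto $\widetilde{H}$ (both sets are closed since $h$, being a sum of a linear function and a difference of finite convex functions, is continuous). Lemma~\ref{lem:1}, applied verbatim to $h$ and $\widetilde{H}$ — the hypotheses on $h$, $j$, and $C$ are exactly those of the present lemma — already says that $h\in\mathcal{H}^j_C$ if and only if every point of $\widetilde{H}$ carries a Euclidean normal vector lying in $C$. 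So the whole task reduces to one transfer statement: for $x\in H$, a vector $w$ is $S$-normal to $H$ at $x$ if and only if $w$ is Euclidean-normal to $\widetilde{H}$ at $Sx$.

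To prove that transfer statement I would start from the identity $S(w,y-x)=\ip{Sw}{y-x}=\ip{w}{Sy-Sx}$, valid for all $w,x,y$ because $S$ is symmetric. Setting $z=Sy$, $z_0=Sx$ (so $y\in H\iff z\in\widetilde{H}$, and $y\to x\iff z\to z_0$), this gives
\begin{displaymath}
  \frac{S(w,y-x)}{\abs{y-x}}=\frac{\ip{w}{z-z_0}}{\abs{z-z_0}}\cdot\frac{\abs{z-z_0}}{\abs{y-x}},
\end{displaymath}
where the last factor lies between the positive constants $\norm{S^{-1}}^{-1}$ and $\norm{S}$. Consequently $\limsup_{H\ni y\to x}S(w,y-x)/\abs{y-x}\le 0$ holds if and only if $\limsup_{\widetilde{H}\ni z\to z_0}\ip{w}{z-z_0}/\abs{z-z_0}\le 0$; that is, $w$ is $S$-regular normal to $H$ at $x$ exactly when $w$ is regular normal to $\widetilde{H}$ at $Sx$. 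Taking limits of pairs $(x_n,w_n)$ and using once more that $S$ carries $H$ homeomorphically onto $\widetilde{H}$ promotes this equivalence from regular normal to normal vectors, which is the transfer statement. (Equivalently, this is the standard transformation rule for normal cones under the invertible symmetric map $S$, combined with the observation stated just before the lemma that $w$ is $S$-(regular) normal to $H$ at $x$ iff $Sw$ is Euclidean (regular) normal to $H$ at $x$.)

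Finally I would assemble the two pieces: under the bijection $z=Sx$ between $H$ and $\widetilde{H}$, the statement ``every $x\in H$ admits an $S$-normal vector $w\in C$ to $H$ at $x$'' is the same as ``every $z\in\widetilde{H}$ admits a Euclidean normal vector $w\in C$ to $\widetilde{H}$ at $z$'', and by Lemma~\ref{lem:1} the latter is equivalent to $h\in\mathcal{H}^j_C$. I do not expect a real obstacle: the one place calling for care is the bookkeeping in the equivalence of the two $\limsup$ conditions — checking that the norm-ratio factor is genuinely harmless and that the substitution $z=Sx$ is carried out consistently on both the numerator and the domain of the $\limsup$ — but this is routine once the identity $\ip{Sw}{y-x}=\ip{w}{Sy-Sx}$ is in hand.
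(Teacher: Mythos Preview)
Your proposal is correct and follows essentially the same route as the paper: introduce the Euclidean zero set $\widetilde{H}=SH$, use the identity $S(w,y-x)=\ip{w}{Sy-Sx}$ together with the two-sided bound $\norm{S^{-1}}^{-1}\le \abs{Sy-Sx}/\abs{y-x}\le \norm{S}$ to show that $S$-normality to $H$ at $x$ coincides with Euclidean normality to $\widetilde{H}$ at $Sx$, and then invoke Lemma~\ref{lem:1}. The only cosmetic difference is that the paper's write-up is slightly more compressed and does not separate the regular-normal and normal cases explicitly.
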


\begin{proof}
  Let $z\in H$ and $w\in \real{d}$. Setting
  \begin{displaymath}
    SH \set \descr{Sx}{x\in H} = \descr{x\in \real{d}}{h(x) = 0}, 
  \end{displaymath}
  recalling that $S(w,y-z) = \ip{w}{Sy-Sz}$, and using the trivial
  inequalities:
  \begin{displaymath}
    \frac1{\norm{S^{-1}}} \abs{y-z} \leq
    \abs{Sy-Sz} \leq \norm{S} \abs{y-z}, \quad y\in \real{d}, 
  \end{displaymath}
  where $\norm{A} \set \max_{\abs{x}=1} \abs{Ax}$ for a $d\times d$
  matrix $A$, we deduce that $w$ is normal to $SH$ at $Sz$ if and only
  if $w$ is $S$-normal to $H$ at $z$. Lemma~\ref{lem:1} yields the
  result.
\end{proof}

Theorem~\ref{th:3} and Lemma~\ref{lem:7} show that the singular sets
$\Sigma^j(P_G)$ and $\Sigma^j (\cpartial^S\psi_G)$ can be covered by
countable $c-c$ surfaces that have at each point an $S$-normal vector
$w \in G-G$, with $w^j=1$. By the $S$-monotonicity of $G$, such vector
$w$ points to the \emph{non-negative} direction in the $S$-space:
$S(w,w) \geq 0$.

\begin{Theorem}
  \label{th:3}
  Let $S\in \msym{d}{m}$, $G\in \mathcal{M}(S)$,
  $j\in \braces{1,\dots, d}$, and assume that
  $D\set \interior{\dom{\psi_G}} \not=\emptyset$.  For every
  $n\geq 1$, there exist a compact set $C_n \subset G-G$ and a
  function $h_n \in \mathcal{H}^{j}_{\theta^{j}(C_n)}$ such that
  \begin{displaymath}
    \Sigma^j(\cpartial^S \psi_G) \subset \Sigma^j(P_G)  \subset\bigcup_n \descr{x\in 
      \real{d}}{h_n(Sx)=0}.
  \end{displaymath}
  For any $\epsilon>0$, all $C_n$ can be chosen such that
  $\diam{\theta^j(C_n)}< \epsilon$.
\end{Theorem}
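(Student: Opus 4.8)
The plan is to reduce Theorem~\ref{th:3} to Theorem~\ref{th:2} applied to the convex function $\psi_G$, transporting everything through the linear isomorphism $S$. By Lemma~\ref{lem:6}, we have $P_G(x) = \partial^S\psi_G(x)\cap G = S^{-1}\partial\psi_G(x)\cap G$, and $\range\nabla\psi_G \subset SG$, so the closed set $A\set SG$ contains $\range\nabla\psi_G$. Since $\psi_G$ is a closed convex function with $D=\interior\dom\psi_G \not=\emptyset$, Theorem~\ref{th:2} applies with this choice of $A$ and the given index $j$: it produces, for every $n\geq 1$, a compact set $\widetilde C_n \subset A-A = S(G-G)$ with $y^j>0$ for $y\in \widetilde C_n$, and a function $h_n \in \mathcal{H}^j_{\theta^j(\widetilde C_n)}$ such that $\Sigma^j_A(\partial\psi_G) \subset \bigcup_n\{x : h_n(x)=0\}$, with $\diam\theta^j(\widetilde C_n)<\epsilon$ for any prescribed $\epsilon$.

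The first substantive step is to match up the singular sets. I would show that
\begin{displaymath}
  \Sigma^j(P_G) = \Sigma^j_A(\partial\psi_G),
\end{displaymath}
where the right side is the singular set of $\partial\psi_G$ relative to $A=SG$ in the sense of Section~\ref{sec:sing-points-conv}. Indeed $x\in\Sigma^j(P_G)$ means there are $y_1,y_2\in P_G(x)$ with $y_1^j\not=y_2^j$; since $P_G(x) = S^{-1}\partial\psi_G(x)\cap G$, the points $Sy_1,Sy_2$ lie in $\partial\psi_G(x)\cap SG = \partial\psi_G(x)\cap A$. The coordinate condition $y_1^j\not=y_2^j$ does \emph{not} immediately transfer to the $j$th coordinates of $Sy_1,Sy_2$, so here one must be slightly careful: the correct reading is that $\Sigma^j(P_G)$ should be compared against a family of singular sets $\Sigma^k_A(\partial\psi_G)$ over $k=1,\dots,d$, or — more in the spirit of the statement — one works throughout with the $S$-adapted notion, noting that $P_G = \partial^S\psi_G\cap G$ and that the $j$th coordinate is read in the original space on both sides. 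I would therefore phrase the reduction so that the index $j$ labels a coordinate of $P_G(x)\subset G\subset\real d$ directly, and verify that $x\in\Sigma^j(P_G)$ iff $\partial^S\psi_G(x)\cap G$ contains two points differing in coordinate $j$, which by Lemma~\ref{lem:6} and Lemma~\ref{lem:5} is exactly the statement that $\partial\psi_G(x)\cap C_1^\alpha\not=\emptyset$ and $\partial\psi_G(x)\cap C_2^\alpha\not=\emptyset$ for suitable compacts $C_i^\alpha\subset SG$ separated in coordinate $j$, i.e.\ membership in $\Sigma^j_{SG}$ after the obvious relabelling. For the Clarke-type inclusion $\Sigma^j(\cpartial^S\psi_G)\subset\Sigma^j(P_G)$, I would invoke \eqref{eq:11} of Theorem~\ref{th:2} together with $\cpartial^S\psi_G = S^{-1}\cpartial\psi_G$.

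The second step is to translate the covering surfaces back. Theorem~\ref{th:2} gives $h_n\in\mathcal{H}^j_{\theta^j(\widetilde C_n)}$ with $\widetilde C_n\subset S(G-G)$ and $\Sigma^j_A(\partial\psi_G)\subset\bigcup_n\{x:h_n(x)=0\}$. Writing $C_n\set S^{-1}\widetilde C_n \subset G-G$, I have $\widetilde C_n = SC_n$. Now observe $\{x : h_n(x)=0\} = \{x : h_n(Sx')=0,\ x=Sx'\}$, so pulling back by $S^{-1}$, the set $\Sigma^j(P_G)$ — which lives in the original space — is covered by $\bigcup_n\{x\in\real d : h_n(Sx)=0\}$. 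It remains only to check the function $h_n$, defined as an element of $\mathcal{H}^j_{\theta^j(\widetilde C_n)}$ with the decomposition \eqref{eq:1}, is the same object the theorem wants: it is, since $\mathcal{H}^j_C$ depends only on $C$, and $\theta^j(\widetilde C_n) = \theta^j(SC_n)$, which we simply record as the defining cone. (One subtlety: Theorem~\ref{th:2} is stated with $\widetilde C_n\subset A-A$ and $y^j>0$ on $\widetilde C_n$ itself; here $A-A = SG-SG = S(G-G)$, so $C_n\subset G-G$ and the positivity of the $j$th coordinate is inherited by $\widetilde C_n = SC_n$, which is what \eqref{eq:1}--\eqref{eq:2} need.) Finally $\diam\theta^j(C_n)<\epsilon$: apply the last sentence of Theorem~\ref{th:2}, noting that $\theta^j$ of the rescaled set is literally the same object in both formulations.

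I expect the main obstacle to be purely bookkeeping: making the correspondence between "coordinate $j$ of a point of $G$" and the coordinate structure used in Theorem~\ref{th:2} (which, via Lemma~\ref{lem:5}, is about $\partial\psi_G(x)\cap A$ with $A = SG$) clean enough that the index $j$ genuinely carries through. The cleanest route is to state and use a one-line lemma: for $G\in\mathcal{M}(S)$ and $A=SG$, one has $\Sigma^j(P_G) = \Sigma^j_A(\partial\psi_G)$ provided the $j$ on the left is interpreted as a coordinate in the $S$-space — i.e.\ as the $j$th coordinate of $S^{-1}\cdot$ applied to elements of $\partial\psi_G(x)\cap A$ — which is exactly how $P_G = \partial^S\psi_G\cap G$ reads coordinates. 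With that identification fixed, everything else is a direct application of Theorem~\ref{th:1}/\ref{th:2} composed with the linear map $S$, and Lemma~\ref{lem:7} then supplies the geometric interpretation of the $h_n(Sx)=0$ surfaces as having $S$-normal vectors in $G-G$ at each point, which is the content advertised before the theorem.
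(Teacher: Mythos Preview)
Your overall strategy---reduce to Theorem~\ref{th:2} via the linear map $S$---is right, but you apply Theorem~\ref{th:2} to the wrong function. Working with $\psi_G$ and $A=SG$ creates exactly the coordinate mismatch you flag and then fail to resolve: the index $j$ in Theorem~\ref{th:2} is a literal Euclidean coordinate of points in $A=SG$, whereas $\Sigma^j(P_G)$ separates points of $G$ in their $j$th coordinate. The identity $\Sigma^j(P_G)=\Sigma^j_{SG}(\partial\psi_G)$ you write down is false, and the ``one-line lemma'' you propose, which reinterprets $j$ on the right-hand side as a coordinate of $S^{-1}(\cdot)$, is not a statement Theorem~\ref{th:2} accepts as input. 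The same issue recurs downstream: Theorem~\ref{th:2} hands you $\widetilde C_n\subset S(G-G)$ with $y^j>0$ on $\widetilde C_n$ and $h_n\in\mathcal{H}^j_{\theta^j(\widetilde C_n)}$; after your relabelling $C_n=S^{-1}\widetilde C_n\subset G-G$ you still have $h_n\in\mathcal{H}^j_{\theta^j(SC_n)}$, not $\mathcal{H}^j_{\theta^j(C_n)}$ as the theorem demands, the positivity $y^j>0$ holds on $SC_n$ rather than on $C_n$, and the diameter bound controls $\theta^j(SC_n)$, not $\theta^j(C_n)$. None of these are bookkeeping; they are genuine gaps.

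The missing idea is to precompose rather than postcompose: set $g(x)\set\psi_G(S^{-1}x)$. Then $\partial g(Sx)=S^{-1}\partial\psi_G(x)=\partial^S\psi_G(x)$ and $\cpartial g(Sx)=\cpartial^S\psi_G(x)$, so Lemma~\ref{lem:6} gives $P_G(x)=\partial g(Sx)\cap G$ and $\range\nabla g\subset G$. Now Theorem~\ref{th:2} applies to $g$ with $A=G$ itself, so the index $j$ matches on both sides without reinterpretation, the compacts $C_n$ lie in $G-G$ directly, $h_n\in\mathcal{H}^j_{\theta^j(C_n)}$ as required, and the covering $\Sigma^j_G(\partial g)\subset\bigcup_n\{z:h_n(z)=0\}$ pulls back under $z=Sx$ to $\Sigma^j(P_G)\subset\bigcup_n\{x:h_n(Sx)=0\}$. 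The inclusion $\Sigma^j(\cpartial^S\psi_G)\subset\Sigma^j(P_G)$ is then just $S^{-1}$ applied to~\eqref{eq:11}. This is precisely the paper's proof.
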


\begin{proof}
  Let $g(x) \set \psi_G(S^{-1}x)$, $x\in \real{d}$. Clearly,
  \begin{displaymath}
    \cpartial^S \psi_G(x) \set S^{-1} \cpartial
    \psi_G(x) = \cpartial g(Sx), \quad x\in \real{d}. 
  \end{displaymath}
  Lemma~\ref{lem:6} shows that
  $S^{-1} \range \nabla \psi _G \subset G$ and
  \begin{displaymath}
    P_G(x) = \partial^S \psi_G(x) \cap G = \cbraces{S^{-1} \partial
      \psi_G(x)} \cap G = \partial g(Sx) \cap G, \quad x\in \real{d}.
  \end{displaymath}
  It follows that
  \begin{align*}
    \Sigma^j(\cpartial^S \psi_G) = S^{-1} \Sigma^j(\cpartial g), \quad
    \Sigma^j(P_G) = S^{-1} \Sigma^j_G(\partial g),  
  \end{align*}
  and a direct application of Theorem~\ref{th:2} yields the result.
\end{proof}

A set $A\subset \real{d}$ is called $S$-\emph{isotropic} if
\begin{displaymath}
  S(x-y,x-y) = 0, \quad x,y \in A. 
\end{displaymath}
We denote by $\mathcal{I}(S)$ the family of all closed $S$-isotropic
subsets of $\real{d}$.

Motivated by the study of existence and uniqueness of backward
martingale transport maps in~\cite{KramSirb:23}, we decompose the
singular set of $P_G$ as
\begin{align*}
  \Sigma(P_G) &\set \descr{x\in \dom{P_G}}{P_G(x) \text{~is not a
                point}} = \Sigma_0(P_G) \cup
                \Sigma_1(P_G), \\
  \Sigma_0(P_G) & \set \descr{x\in \Sigma(P_G)}{P_G(x) \in
                  \mathcal{I}(S)}, \\
  \Sigma_1(P_G) & \set \descr{x\in \Sigma(P_G)}{S(y_1-y_2,y_1-y_2)>0
                  \text{~for \emph{some}~} y_1,y_2 \in P_G(x)}. 
\end{align*}
We further write $\Sigma_1(P_G)$ as
\begin{displaymath}
  \Sigma_1(P_G) = \bigcup_{j=1}^d \Sigma_1^j(P_G), 
\end{displaymath}
where $\Sigma_1^j(P_G)$ consists of $x\in \Sigma(P_G)$ such that
$S(y_1-y_2,y_1-y_2)>0$ for some $y_1,y_2 \in P_G(x)$ with
$y_1^j \not = y_2^j$.

Theorem~\ref{th:4} and Lemma~\ref{lem:7} show that $\Sigma^j_1(P_G)$
can be covered by countable $c-c$ surfaces that have at each point a
strictly $S$-positive $S$-normal vector $w$ with $w^j=1$.

\begin{Theorem}
  \label{th:4}
  Let $S\in \msym{d}{m}$, $G\in \mathcal{M}(S)$,
  $j\in \braces{1,\dots,d}$, and assume that
  $D\set \interior{\dom{\psi_G}} \not=\emptyset$. For every $n\geq 1$,
  there exist a compact set $C_n \subset G-G$ with
  \begin{displaymath}
    x^j>0 \text{~and~} S(x,x) > 0, \quad x\in C_n, 
  \end{displaymath}
  and a function $h_n \in \mathcal{H}^{j}_{\theta^j(C_n)}$ such that
  \begin{displaymath}
    \Sigma^j_1(P_G)  \subset\bigcup_n \descr{x\in 
      \real{d}}{h_n(Sx)=0}.
  \end{displaymath}
  For any $\epsilon>0$, all $C_n$ can be chosen such that
  $\diam{\theta^j(C_n)}< \epsilon$.
\end{Theorem}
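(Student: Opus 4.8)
The plan is to mimic the reduction in the proof of Theorem~\ref{th:3} and then the covering construction in the proof of Theorem~\ref{th:2}, while dragging along the extra requirement $S(x,x)>0$ on the difference sets; Theorem~\ref{th:2} by itself only delivers $x^j>0$ on the $C_n$, so the selection of covering surfaces must be tightened. First I would set $g(x)\set\psi_G(S^{-1}x)$, which is closed convex as a composition of the closed convex $\psi_G$ with the invertible symmetric linear map $S^{-1}$. Lemma~\ref{lem:6} gives $\range\nabla g\subset G$ and $P_G(x)=\partial^S\psi_G(x)\cap G=\partial g(Sx)\cap G$, so exactly as in Theorem~\ref{th:3} one gets $\Sigma^j_1(P_G)=S^{-1}\widetilde\Sigma$, where
\[
  \widetilde\Sigma\set\descr{y\in\real{d}}{\exists\, z_1,z_2\in\partial g(y)\cap G \text{ with } z_1^j\not=z_2^j \text{ and } S(z_1-z_2,z_1-z_2)>0}.
\]
Since $S^{-1}\descr{y\in\real{d}}{h(y)=0}=\descr{x\in\real{d}}{h(Sx)=0}$ for any $h$, it then suffices to cover $\widetilde\Sigma$ by countably many zero-level sets of functions $h_n\in\mathcal H^j_{\theta^j(C_n)}$ with $C_n\subset G-G$ satisfying $x^j>0$ and $S(x,x)>0$ for $x\in C_n$, and $\diam\theta^j(C_n)<\epsilon$.

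For the covering I would fix $\epsilon>0$, take a dense sequence $(x_k)$ in $G$ and an enumeration $(r_l)$ of the positive rationals, and for $\alpha=(m,n,k,l)$ form the compacts $C^\alpha_1\set G\cap B(x_m,r_k)$, $C^\alpha_2\set G\cap B(x_n,r_l)$ (closed balls) and their difference $C^\alpha\set C^\alpha_2-C^\alpha_1\subset G-G$. I would let $I$ consist of those $\alpha$ with $C^\alpha_1,C^\alpha_2$ nonempty for which $x^j>0$ and $S(x,x)>0$ for all $x\in C^\alpha$ and $\diam\theta^j(C^\alpha)<\epsilon$. For $\alpha\in I$ the condition $x^j>0$ on $C^\alpha$ is exactly what Theorem~\ref{th:1} (applied to $f=g$, $C_1=C^\alpha_1$, $C_2=C^\alpha_2$) needs, and it yields $h_\alpha\in\mathcal H^j_{\theta^j(C^\alpha)}$ with $\Sigma_{C^\alpha_1,C^\alpha_2}(\partial g)\subset\descr{y\in\real{d}}{h_\alpha(y)=0}$. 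Re-indexing $I$ by $n\geq1$ then supplies the $C_n$ and $h_n$ claimed in the statement, with $\diam\theta^j(C_n)<\epsilon$ built in.

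The remaining step — and the only one needing genuine care — is the inclusion $\widetilde\Sigma\subset\bigcup_{\alpha\in I}\Sigma_{C^\alpha_1,C^\alpha_2}(\partial g)$. Here I would take $y\in\widetilde\Sigma$ with witnesses $z_1,z_2\in\partial g(y)\cap G$, ordered so that $z_2^j>z_1^j$ after a possible swap, and with $S(z_1-z_2,z_1-z_2)>0$, and use the continuity of $w\mapsto w^j$ and of $w\mapsto S(w,w)$ together with the continuity of $\theta^j$ on $\rplus{d}{j}$ to find $\delta>0$ so small that every $w\in B(z_2,\delta)-B(z_1,\delta)$ has $w^j>0$ and $S(w,w)>0$ and $\diam\theta^j\cbraces{B(z_2,\delta)-B(z_1,\delta)}<\epsilon$; then, by density of $(x_k)$ in $G$ and of the rationals, I would pick a rational $r_k\in(0,\delta/2)$ and an $x_m$ with $\abs{x_m-z_1}<r_k$, and similarly $r_l,x_n$ for $z_2$, so that for $\alpha=(m,n,k,l)$ one has $z_1\in C^\alpha_1\subset B(z_1,\delta)$, $z_2\in C^\alpha_2\subset B(z_2,\delta)$, hence $C^\alpha\subset B(z_2,\delta)-B(z_1,\delta)$ and $\alpha\in I$; since $z_i\in\partial g(y)\cap C^\alpha_i$, this gives $y\in\Sigma_{C^\alpha_1,C^\alpha_2}(\partial g)$. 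The main obstacle is thus purely this simultaneous realization, around an arbitrary witnessing pair, of the three open conditions defining $I$: once the defining property of $\Sigma^j_1$, namely strict $S$-positivity of some witnessing difference, is used to open up the condition $S(w,w)>0$, the rest is a routine continuity/density argument and everything else reduces directly to Theorems~\ref{th:1} and~\ref{th:3}.
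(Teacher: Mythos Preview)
Your argument is correct and is essentially the paper's own proof: the paper likewise sets $g(x)=\psi_G(S^{-1}x)$, uses Lemma~\ref{lem:6} to identify $P_G(x)=\partial g(Sx)\cap G$, builds the same countable family $(C^\alpha_1,C^\alpha_2)$ of ball-intersections with $G$ subject to the three constraints $x^j>0$, $S(x,x)>0$, and $\diam\theta^j(C^\alpha_2-C^\alpha_1)<\epsilon$, and then applies Theorem~\ref{th:1} to each $\Sigma_{C^\alpha_1,C^\alpha_2}(\partial g)$. The only difference is that the paper states the covering identity $\Sigma^j_1(P_G)=\bigcup_\alpha\Sigma_{C^\alpha_1,C^\alpha_2}(P_G)$ without spelling out the continuity/density argument that you supply in your third paragraph.
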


\begin{proof}
  Fix $\epsilon >0$.  Let $(x_n)$ be a dense sequence in $G$ and
  $(r_n)$ be an enumeration of all positive rationals.  Denote by
  $\alpha\set (m, n,k,l)$ the indexes for which the compact sets
  \begin{displaymath}
    C^{\alpha}_1 \set\descr{x\in G }{ |x-x_m|\leq r_k}, \quad
    C^{\alpha}_2 \set\descr{x\in G}{|x-x_n|\leq r_l},
  \end{displaymath}
  satisfy the constraints:
  \begin{displaymath}
    \diam  \theta^j(C^{\alpha}_2-C^{\alpha}_1) < \epsilon
    \text{~and~} x^j > 0, \; S(x,x) > 0, \; x \in C_2^{\alpha} -
    C_1^{\alpha}. 
  \end{displaymath}
  We have that
  \begin{displaymath}
    \Sigma^j_1(P_G)  =
    \bigcup_{\alpha}\Sigma_{C^{\alpha}_1, C^{\alpha}_2}(P_G)=
    \bigcup_{\alpha} \descr{x\in \real{d}}{P_G(x)\cap
      C^\alpha_i\not=\emptyset, \; i=1,2}.
  \end{displaymath}
  Let, again,  $g(x) \set \psi_G(S^{-1}x)$, $x\in \real{d}$. From
  Lemma~\ref{lem:6} we deduce that
  \begin{displaymath}
    P_G(x) = \cbraces{S^{-1} \partial \psi_G(x)} \cap G = \partial
    g(Sx) \cap G, \quad x\in \real{d}. 
  \end{displaymath}
  It follows that
  \begin{displaymath}
    x\in \Sigma_{C^{\alpha}_1, C^{\alpha}_2}(P_G) \iff
    Sx \in  \Sigma_{C^{\alpha}_1, C^{\alpha}_2}(\partial g). 
  \end{displaymath}
  Applying Theorem~\ref{th:1} to each singular set
  $\Sigma_{C^{\alpha}_1, C^{\alpha}_2}(\partial g)$, we obtain the
  result.
\end{proof}

The singular set $\Sigma_0(P_G)$ is included into a larger set
\begin{displaymath}
  \overline{\Sigma}_0(P_G) = \bigcup_{j=1}^d
  \overline{\Sigma}^j_0(P_G),  
\end{displaymath}
where $\overline{\Sigma}^j_0(P_G)$ consists of $x\in \dom{P_G}$ such
that $S(y_1-y_2,y_1-y_2)=0$ for \emph{some} $y_1,y_2 \in P_G(x)$ with
$y_1^j \not = y_2^j$.

Theorem~\ref{th:5} and Lemma~\ref{lem:7} show that
$\overline{\Sigma}_0^j(P_G)$ can be covered, for any $\delta >0$, by
countable $c-c$ surfaces that have at each point an $S$-normal vector
$w$ such that $w^j=1$ and $0\leq S(w,w)\leq \delta$.

\begin{Theorem}
  \label{th:5}
  Let $S\in \msym{d}{m}$, $G\in \mathcal{M}(S)$,
  $j\in \braces{1,\dots,d}$, and assume that
  $D\set \interior{\dom{\psi_G}} \not=\emptyset$. Let $\delta>0$.  For
  every $n\geq 1$, there exist a compact set $C_{n} \subset G-G$ with
  \begin{displaymath}
    x^j>0, \; x\in C_n, \text{~and~} 0\leq S(x,x) \leq
    \delta, \; x\in  \theta^j(C_{n}),   
  \end{displaymath}
  and a function $h_{n} \in \mathcal{H}^{j}_{\theta^j(C_{n})}$ such
  that
  \begin{displaymath}
    \overline{\Sigma}^j_0(P_G)  \subset \bigcup_n \descr{x\in 
      \real{d}}{h_{n}(Sx)=0}.
  \end{displaymath}
  For any $\epsilon>0$, all $C_{n}$ can be chosen such that
  $\diam{\theta^j(C_n)}< \epsilon$.
\end{Theorem}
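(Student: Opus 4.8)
The plan is to run the argument of Theorem~\ref{th:4} essentially verbatim, changing only the constraint imposed on the difference sets: instead of demanding strict $S$-positivity, $S(x,x)>0$, of the vectors in $C^{\alpha}_2-C^{\alpha}_1$, I require the two-sided bound $0\le S(x,x)\le \delta$ on the \emph{rescaled} difference sets $\theta^j(C^{\alpha}_2-C^{\alpha}_1)$. The key observation that makes this work is that the lower bound $S(x,x)\ge 0$ is free: every vector of $\theta^j(C^{\alpha}_2-C^{\alpha}_1)$ is a positive multiple of a difference $z_2-z_1$ of points of $G$, and $G\in\mathcal M(S)$ forces $S(z_2-z_1,z_2-z_1)\ge 0$. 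Only the upper bound $S(x,x)\le\delta$ has to be arranged, and it will be, by a continuity argument localized around a direction that is \emph{exactly} $S$-isotropic.

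Concretely, fix $\epsilon>0$, let $(x_n)$ be a dense sequence in $G$, and let $(r_n)$ enumerate the positive rationals. For a multi-index $\alpha=(m,n,k,l)$ set
\begin{displaymath}
  C^{\alpha}_1\set\descr{x\in G}{\abs{x-x_m}\le r_k},\qquad
  C^{\alpha}_2\set\descr{x\in G}{\abs{x-x_n}\le r_l},
\end{displaymath}
and call $\alpha$ \emph{admissible} if
\begin{displaymath}
  C^{\alpha}_2-C^{\alpha}_1\subset\rplus{d}{j},\quad
  \diam\theta^j\cbraces{C^{\alpha}_2-C^{\alpha}_1}<\epsilon,\quad
  0\le S(x,x)\le\delta \ \text{ for } x\in\theta^j\cbraces{C^{\alpha}_2-C^{\alpha}_1}.
\end{displaymath}
There are only countably many admissible $\alpha$; relabel the compacts $C^{\alpha}_2-C^{\alpha}_1$ as $C_n$. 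By construction $C_n\subset G-G$, $x^j>0$ for $x\in C_n$, $\diam\theta^j(C_n)<\epsilon$, and $0\le S(x,x)\le\delta$ for $x\in\theta^j(C_n)$, which is all the statement asks of the $C_n$.

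The crux is the inclusion $\overline{\Sigma}^j_0(P_G)\subset\bigcup_{\alpha}\Sigma_{C^{\alpha}_1,C^{\alpha}_2}(P_G)$, the union over admissible $\alpha$, where as in the proof of Theorem~\ref{th:4} one writes $\Sigma_{C_1,C_2}(P_G)\set\descr{x\in\real{d}}{P_G(x)\cap C_i\ne\emptyset,\ i=1,2}$. Let $x\in\overline{\Sigma}^j_0(P_G)$; pick $y_1,y_2\in P_G(x)$ with $y_1^j\ne y_2^j$ and $S(y_1-y_2,y_1-y_2)=0$, and relabel so that $y_2^j>y_1^j$. Then $z\set(y_2-y_1)/(y_2-y_1)^j$ has $z^j=1$ and $S(z,z)=0$. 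Since $w\mapsto S(w,w)$ is continuous and the rescaling $y\mapsto y/y^j$ is continuous on $\rplus{d}{j}$, one can choose radii $\rho_1,\rho_2>0$ so small that the sets $B_i\set\descr{x\in G}{\abs{x-y_i}\le\rho_i}$ satisfy $B_2-B_1\subset\rplus{d}{j}$, $\diam\theta^j(B_2-B_1)<\epsilon$, and $\abs{S(w,w)}<\delta$ for every $w\in\theta^j(B_2-B_1)$; as noted above, $S(w,w)\ge 0$ for such $w$ by $S$-monotonicity of $G$. Taking $x_m,x_n$ from the dense sequence close to $y_1,y_2$ and rationals $r_k<\rho_1$, $r_l<\rho_2$ in the usual way, we obtain an admissible $\alpha$ with $y_1\in C^{\alpha}_1$, $y_2\in C^{\alpha}_2$, hence $x\in\Sigma_{C^{\alpha}_1,C^{\alpha}_2}(P_G)$.

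To manufacture the surfaces, set $g(x)\set\psi_G(S^{-1}x)$; by Lemma~\ref{lem:6}, $P_G(x)=\partial g(Sx)\cap G$, and since $C^{\alpha}_i\subset G$ this gives $x\in\Sigma_{C^{\alpha}_1,C^{\alpha}_2}(P_G)\iff Sx\in\Sigma_{C^{\alpha}_1,C^{\alpha}_2}(\partial g)$. For each admissible $\alpha$ (which has $x^j>0$ on $C^{\alpha}_2-C^{\alpha}_1$, so Theorem~\ref{th:1} applies), Theorem~\ref{th:1} produces $h\in\mathcal H^j_{\theta^j(C^{\alpha}_2-C^{\alpha}_1)}$ with $\Sigma_{C^{\alpha}_1,C^{\alpha}_2}(\partial g)\subset\descr{x\in\real{d}}{h(x)=0}$; relabelling these functions as $h_n$ and combining with the inclusion of the previous paragraph yields $\overline{\Sigma}^j_0(P_G)\subset\bigcup_n\descr{x\in\real{d}}{h_n(Sx)=0}$, with $h_n\in\mathcal H^j_{\theta^j(C_n)}$ as required. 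I expect the only point needing genuine care to be the existence, for a prescribed witnessing pair $(y_1,y_2)$, of an admissible $\alpha$ capturing it — i.e., the simultaneous bookkeeping of the three constraints — but, as sketched, this reduces to continuity of $S(\cdot,\cdot)$ and of the rescaling map together with the free sign $S\ge 0$ from monotonicity, so no idea beyond Theorem~\ref{th:4} is needed.
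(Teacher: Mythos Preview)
Your proposal is correct and follows essentially the same route as the paper's proof, which is itself just a one-paragraph note that the argument of Theorem~\ref{th:4} goes through with the constraint $S(x,x)>0$ replaced by $0\le S(x,x)<\delta$ on the rescaled differences. You in fact supply more detail than the paper does: the explicit continuity-and-density step that produces an admissible $\alpha$ capturing a given isotropic witnessing pair $(y_1,y_2)$, and the observation that the lower bound $S\ge 0$ is automatic from $G\in\mathcal M(S)$, are left implicit in the paper but are exactly what is needed.
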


\begin{proof}
  The proof is almost identical to that of Theorem~\ref{th:4}. We fix
  $\delta, \epsilon >0$ and find a \emph{countable} family
  $(C^\alpha_1,C^\alpha_2)$ of pairs of compact sets
  $C^\alpha_i \subset G$, $i=1,2$, such that
  \begin{gather*}
    x^j > 0, \; x \in C_2^{\alpha} - C_1^{\alpha}, \text{~and~}
    \diam  \theta^j(C^{\alpha}_2-C^{\alpha}_1) < \epsilon,\\
    0\leq S(x,x) < \delta, \; x \in \theta ^j(C_2^{\alpha} -
    C_1^{\alpha}),
  \end{gather*}
  and every pair $(y_1,y_2)$ of elements of $G$ with
  $S(y_1-y_2,y_1-y_2)=0$ and $y_1^j \not = y_2^j$ is contained in some
  $(C^\alpha_1,C^\alpha_2)$. Then
  \begin{displaymath}
    \overline{\Sigma}^j_0(P_G)  \subset
    \bigcup_{\alpha}\Sigma_{C^{\alpha}_1, C^{\alpha}_2}(P_G)=
    \bigcup_{\alpha} S^{-1} \Sigma_{C^{\alpha}_1, C^{\alpha}_2}(\partial g), 
  \end{displaymath}
  where $g(x) \set \psi_G(S^{-1}x)$, $x\in \real{d}$, and
  Theorem~\ref{th:1} applied to
  $\Sigma_{C^{\alpha}_1, C^{\alpha}_2}(\partial g)$ yields the result.
\end{proof}

The geometric description of the zero order singularities becomes
especially simple if the index $m=1$. In this case,
$\overline{\Sigma}^j_0(P_G)$ can be covered by countable number of
hyperplanes whose $S$-normal vectors are $S$-isotropic.

\begin{Theorem}
  \label{th:6}
  Let $S\in \msym{d}{1}$, $G\in \mathcal{M}(S)$,
  $j\in \braces{1,\dots,d}$, and assume that
  $D\set \interior{\dom{\psi_G}} \not=\emptyset$. For every $n\geq 1$,
  there exist $y_n,z_n \in G$ such that
  \begin{equation}
    \label{eq:14}
    y^j_n-z^j_n >0, \quad S(y_n-z_n,y_n-z_n) = 0, 
  \end{equation}
  and
  \begin{displaymath}
    \overline{\Sigma}^j_0(P_G) \subset\bigcup_n
    \descr{x\in \real{d}}{S(x-z_n,y_n-z_n)= 0}. 
  \end{displaymath}
\end{Theorem}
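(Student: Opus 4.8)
The plan is to fix the index $j$ and build the countable family by hand, exploiting two features special to $m=1$: that $G$ is the graph of a Lipschitz map of a single variable, and that the light cone of $S$ is strictly convex, which forces $G$ to be \emph{affine} along every isotropic chord. First I would choose a unit eigenvector $e$ of $S$ with $Se=\lambda e$, $\lambda>0$. Since $\ip{e}{Su}=\lambda\ip{e}{u}$, the form $S$ vanishes between $e$ and $e^{\perp}=\descr{u\in\real{d}}{\ip{e}{u}=0}$ and is negative definite on $e^{\perp}$ (where its $d-1$ negative eigenvalues sit), so $N(u):=\sqrt{-S(u,u)}$ is a Euclidean — hence strictly convex — norm there. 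Writing $x=te+u$ with $t=\ip{x}{e}$ and $u=x-te\in e^{\perp}$, the $S$-monotonicity of $G$ becomes $\lambda(t-t')^{2}\ge N(u-u')^{2}$ for points $te+u,\,t'e+u'$ of $G$; in particular $t$ determines the point, so $G=\descr{te+F(t)}{t\in D}$ for a closed $D\subseteq\real{}$ and a map $\map{F}{D}{e^{\perp}}$ that is $\sqrt{\lambda}$-Lipschitz for $N$ (the $m=1$ graph structure, cf.\ Example~\ref{ex:2}).

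Next I would record the elementary identity that if $y_{1},y_{2}\in P_G(x)$ then $\psi_G(x)=S(x,y_{i})-\frac12 S(y_{i},y_{i})$ for $i=1,2$, and subtracting the two relations gives $S(x-y_{2},\,y_{1}-y_{2})=\frac12 S(y_{1}-y_{2},\,y_{1}-y_{2})$. Hence, whenever $y_{1},y_{2}\in P_G(x)$ with $S(y_{1}-y_{2},y_{1}-y_{2})=0$, the point $x$ lies on the hyperplane $\descr{x'\in\real{d}}{S(x'-y_{2},\,y_{1}-y_{2})=0}$. So, after noting that we may assume $\overline{\Sigma}^{j}_{0}(P_G)\ne\emptyset$, all that remains is to show that, as $(y_{1},y_{2})$ ranges over the pairs occurring in its definition, only countably many such hyperplanes arise and each can be rewritten in the stated form with $y_{n},z_{n}\in G$.

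The heart of the matter is this countability, and here is where $m=1$ is used. Call $\{t',t\}\subseteq D$, $t'<t$, an \emph{isotropic chord} if $N(F(t)-F(t'))=\sqrt{\lambda}(t-t')$, which is exactly the condition that the corresponding points of $G$ differ by an $S$-isotropic vector. By the triangle inequality and the strict convexity of $N$, $F$ must then be affine on $D\cap[t',t]$ with some slope $v\in e^{\perp}$, $N(v)=\sqrt{\lambda}$; consequently that difference vector equals $(t-t')(e+v)$ and $S(e+v,e+v)=\lambda-\lambda=0$. Two isotropic chords with different slopes have disjoint open intervals $(t',t)$ — an overlap together with the common affineness would force the slopes to coincide — so only countably many slopes $v$ occur; and for a fixed $v$, the sets $L_{v,c}:=\descr{t\in D}{F(t)-tv=c}$ having at least two elements are order-convex in $D$ with pairwise disjoint convex hulls, hence countable in number. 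Enumerating the resulting pairs $(v_{k},c_{k})$, for each one I would pick $t_{k}^{-}<t_{k}^{+}$ in $L_{v_{k},c_{k}}$ and set $z_{k}=t_{k}^{-}e+F(t_{k}^{-})$, $y_{k}=t_{k}^{+}e+F(t_{k}^{+})$, swapping them if necessary so that $(y_{k}-z_{k})^{j}>0$ (possible exactly when $(e+v_{k})^{j}\ne 0$; if $(e+v_{k})^{j}=0$ I discard that $k$, since then no pair from that configuration enters $\overline{\Sigma}^{j}_{0}(P_G)$). Each retained $(y_{k},z_{k})$ lies in $G$ and satisfies~\eqref{eq:14}.

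Finally, to conclude, I would take $x\in\overline{\Sigma}^{j}_{0}(P_G)$ and pick $y_{i}=t_{i}e+F(t_{i})\in P_G(x)$ with $y_{1}^{j}\ne y_{2}^{j}$ (so $t_{1}\ne t_{2}$) and $S(y_{1}-y_{2},y_{1}-y_{2})=0$. Then $\{\min(t_{1},t_{2}),\max(t_{1},t_{2})\}$ is an isotropic chord of some slope $v_{k}$, with $t_{1},t_{2}$ and $t_{k}^{\pm}$ all lying in $L_{v_{k},c_{k}}$; hence both $y_{1}-y_{2}$ and $y_{2}-z_{k}$ are nonzero multiples of the isotropic vector $e+v_{k}$, so $S(y_{2}-z_{k},\,y_{1}-y_{2})=0$ and the hyperplanes $\descr{x'}{S(x'-y_{2},y_{1}-y_{2})=0}$ and $\descr{x'}{S(x'-z_{k},y_{k}-z_{k})=0}$ coincide; by the identity of the second paragraph, $x$ belongs to it, which gives the asserted inclusion. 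The only genuinely delicate step is the one in the third paragraph — collapsing the a priori continuum of isotropic chords to a countable list — and it rests essentially on the strict convexity of $-S$ on $e^{\perp}$, i.e.\ exactly on $m=1$; this is what fails for $m\ge 2$ and explains why Theorems~\ref{th:5} and~\ref{th:6} give genuinely different conclusions.
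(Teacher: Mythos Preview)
Your proof is correct and follows essentially the same route as the paper's: both reduce $G$ to the graph of a $1$-Lipschitz map of one real variable (you via an eigen-decomposition of $S$, the paper via the law of inertia and the canonical form $\Lambda$), use strict convexity of the Euclidean norm on the negative-definite subspace to force $F$ to be affine along each isotropic chord, deduce that the isotropic pairs in $G$ are organized into countably many line segments, and then use the same polarization-type identity to place each $x\in\overline{\Sigma}^{j}_{0}(P_G)$ on the corresponding hyperplane. Your treatment is in fact more explicit than the paper's at the countability step---you separate slopes and then intercepts, whereas the paper simply asserts the decomposition into segments with disjoint relative interiors---and your derivation of $S(x-y_{2},y_{1}-y_{2})=\tfrac12 S(y_{1}-y_{2},y_{1}-y_{2})$ by subtracting the two $\psi_G$-values is the algebraic twin of the paper's polarization identity $2S(x-z,y-z)=S(x-z,x-z)+S(y-z,y-z)-S(x-y,x-y)$.
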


\begin{proof}
  By the law of inertia for quadratic forms, \cite[Theorem~1,
  p.~297]{Gant-1:98}, there exists a $d\times d$ matrix $V$ of full
  rank such that
  \begin{displaymath}
    S = V^T \Lambda V, 
  \end{displaymath}
  where $V^T$ is the transpose of $V$ and $\Lambda$ is the diagonal
  matrix with diagonal entries $\braces{1,-1,\dots,-1}$. In other
  words, $\Lambda$ is the canonical quadratic form for
  $\mathcal{S}^d_1$ from Example~\ref{ex:2}.  As
  $S(x,x) = \Lambda(Vx,Vx)$, we deduce that
  $F\set VG \in \mathcal{M}(\Lambda)$ and
  \begin{displaymath}
    A\in \mathcal{I}(S) \iff VA \in
    \mathcal{I}(\Lambda).
  \end{displaymath}
  As we pointed out in Example~\ref{ex:2},
  \begin{displaymath}
    F = \graph{f} = \descr{(t,f(t))}{t\in D},
  \end{displaymath}
  for a $1$-Lipschitz function $\map{f}{D}{\real{d-1}}$ defined a
  closed set $D\subset \real{}$.

  Let $x=(s,f(s))$ and $y=(t,f(t))$ be distinct elements of $F$, where
  $s,t\in D$. We have that $\braces{x,y} \in \mathcal{I}(\Lambda)$ if
  and only if $\abs{f(t)-f(s)} = \abs{t-s}$. The $1$-Lipschitz
  property of $f$ then implies that
  \begin{displaymath}
    f(r) = f(s) + \frac{r-s}{t-s} \cbraces{f(t)-f(s)}, \quad r\in D
    \cap [s,t].  
  \end{displaymath}
  It follows that the collection of all $\Lambda$-isotropic subsets of
  $F$ can be decomposed into an intersection of $F$ with at most
  countable union of line segments whose relative interiors do not
  intersect.

  The same property then also holds for the $S$-isotropic subsets of
  $G$. Thus, there exist $y_n,z_n \in G$, $n\geq 1$,
  satisfying~\eqref{eq:14} and such that every $S$-isotropic subset of
  $G$ having elements with distinct $j$th coordinates is a subset of
  some $S$-isotropic line
  $L_n \set \descr{y_n + t(z_n-y_n)}{t\in \real{}}$. In particular, if
  $x\in \overline{\Sigma}_0^j(P_G)$, then $P_G(x)$ intersects some
  line $L_n$ at distinct $y$ and $z$. We have that
  \begin{displaymath}
    2S(x-z,y-z)  = S(x-z,x-z) + S(y-z,y-z) - S(x-y,x-y) = 0. 
  \end{displaymath}
  As $y,z\in L_n\in \mathcal{I}(S)$, we obtain that
  $S(x-z_n,y_n-z_n) = 0$, as required.
\end{proof}

\begin{Example}
  \label{ex:3}
  Let $d=2$ and $S$ be the standard bilinear form from
  Example~\ref{ex:1}:
  \begin{displaymath}
    S(x,y)=S((x_1,x_2),(y_1,y_2)) = x_1y_2 + x_2y_1.
  \end{displaymath}
  Let $G\in \mathcal{M}(S)$. As $S(x,x) = 2x_1x_2$, we have that
  \begin{displaymath}
    \Sigma_1(P_G) = \Sigma^j_1(P_G), \quad j=1,2. 
  \end{displaymath}
  Theorem \ref{th:4} yields convex functions $g_{1,n}$ and $g_{2,n}$
  on $\real{}$ and constants $\epsilon_n>0$, $n\geq 1$, such that
  ($g'=g'(t)$ is the derivative of $g=g(t)$)
  \begin{gather*}
    \epsilon_n \leq g'_{1,n}(t) - g'_{2,n}(t) \leq \epsilon_n^{-1},
    \quad t\in
    \dom{g'_{1,n}} \cap \dom{g'_{2,n}}, \\
    \Sigma _1 (P_G)\subset \bigcup _n \descr{x\in \real{2}}{x_2=
      g_{2,n}(x_1) - g_{1,n}(x_1)}.
  \end{gather*}
  Theorem \ref{th:6} yields sequences $(x^n_1)$ and $(x^n_2)$ in
  $\real{}$ such that
  \begin{displaymath}
    \overline{\Sigma}^1_0(P_G)\subset
    \bigcup _n \descr{x\in \real{2}}{x_2=x^n_2}, \quad
    \overline{\Sigma}^2_0(P_G)\subset 
    \bigcup _n \descr{x\in \real{2}}{x_1=x^n_1}.
  \end{displaymath}
  These results improve \cite[Theorem~B.12]{KramXu:22}, where $G$ is a
  maximal monotone set and $g_n \set g_{1,n} - g_{2,n}$ is a strictly
  increasing Lipschitz function such that
  $\epsilon_n \leq g'_n(t) \leq \epsilon^{-1}_n$, whenever it is
  differentiable.
\end{Example}

\section*{Acknowledgments}

We thank Giovanni Leoni for pointing out the
references~\cite{Zahorski:46} and \cite{FowlPreiss:09}. We also thank
an anonymous referee for constructive remarks and for bringing to our
attention the references~\cite{BenLin:00} and~\cite{Thib:23}.

\bibliographystyle{plainnat} 
\def\cprime{$'$}

\end{document}